\documentclass[12pt]{amsart}
\usepackage{graphicx}
\usepackage{commath}
\usepackage{stmaryrd}
\usepackage{xcolor}
\newtheorem{lemma}{Lemma}

\newtheorem{theorem}{Theorem}
\theoremstyle{definition}
\newtheorem{definition}{Definition}
\newtheorem{remark}{Remark}
\newtheorem{conjecture}{Conjecture}
\newtheorem{problem}{Problem}

\newtheorem{proposition}{Proposition}

\newcommand{\C}{\mathbb{C}}
\newcommand{\Z}{\mathbb{Z}}

\newcommand{\D}{\mathbb{D}}

\newcommand{\Belt}{\mathrm{Belt}}

\begin{document}
\title{Unique extremality of affine maps on plane domains}
\author{Qiliang Luo \,\,\text{and}\,\, Vladimir Markovi\'c}
\address{\newline Yau's Mathematical Sciences Center  \newline Tsinghua University   \newline Beijing, China}

\today

\subjclass[2020]{Primary 30C75}

\begin{abstract} We prove that affine maps are uniquely extremal quasiconformal maps on the complement of a well distribute set in the complex plane answering a conjecture from \cite{markovic}. We construct the required Reich sequence using  Bergman projections, and meromorphic partitions of unity.
\end{abstract}

\maketitle

\section{Introduction}

Let $f:X \to Y$ be a quasiconformal map between Riemann surfaces $X$ and $Y$. By $\Belt(f)=\frac{\overline\partial f}{{\partial} f}$ we denote the Beltrami dilatation of $f$. Then $\Belt(f)$ is a $(-1,1)$ complex form on $X$, while $|\Belt(f)|$ is a measurable function on $X$ such that $||\Belt(f)||_{\infty}<1$. 
We say that $f$  is extremal  if $f$ has the smallest Beltrami dilation in its homotopy class, that is, if $||\Belt(f)||_{\infty}\le ||\Belt(g)||_{\infty}$ for every quasiconformal map $g$ homotopic to $f$. We say that $f$ is uniquely extremal if it is the only extremal map in its homotopy class.

Every quasiconformal map is homotopic to an extremal quasiconformal map (see \cite{ahlfors}).  The classical extremal problem, 
first studied by Gr\"otzsch and Teichm\"uller, is to describe extremal quasiconformal maps, and to decide when they are uniquely extremal. 
When $X$ and $Y$ are finite type Riemann surfaces then every extremal map is of Teichm\"uller type, and it is uniquely extremal. In general, extremal quasiconformal maps may not be of Teichm\"uller type, nor they are necessarily uniquely extremal. 

In \cite{r-s} Reich and Strebel proposed the following special case of the extremal problem which is  general enough, and which should be manageable.

\begin{problem} Characterise discrete subsets $E\subset \C$ such that affine maps are extremal, or uniquely extremal, as  quasiconformal maps on 
$\C \setminus E$. 
\end{problem}
\begin{remark} Let $A_1,A_2:\C\to \C$ be two affine maps neither of which is the identity map. Then $A_1$ is extremal (uniquely extremal) on $\C\setminus E$ if and only $A_2$ is extremal (uniquely extremal) on $\C\setminus E$. 
This follows from the analytical characterisation of extremal maps \cite{hamilton}, \cite{r-s-0}, and uniquely extremal maps \cite{b-l-m-m}.
Thus, the above problem is well posed.
\end{remark}

The methods used to study these problems are partly geometric, and partly analytic, and it is this interplay which makes them interesting. However, even in this generality the problem is very hard. In \cite{r-s} Reich and Strebel provided a somewhat complicated characterisation of discrete sets $E$ such that affine maps are extremal on $\C \setminus E$. On the other hand, they were not aware of a single example  such that affine maps are uniquely extremal on $\C \setminus E$. The first such example was provided in \cite{markovic} where it was shown that affine maps are uniquely extremal on $\C\setminus 
(\Z+i\Z)$ (here  $\Z+i\Z$ denotes the integer lattice in the complex plane). This answered a conjecture by Kra-Reich-Strebel.

\begin{definition}  Let $c>0$. A discrete set $E\subset  \C$ is called c-well distributed if $E$ intersects any  disc of Euclidean radius $c$ in the complex plane. We say that $E$ is well distributed  if it is c-well distributed for some $c>0$.
\end{definition}
The following conjecture was posed in \cite{markovic}.

\begin{conjecture}\label{conj} If $E$ is a well distributed set then affine maps are uniquely extremal on $\C\backslash E$.
\end{conjecture}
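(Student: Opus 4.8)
\medskip
\noindent\emph{Strategy of proof (proposal).} Fix a $c$-well distributed set $E$. After a conformal normalization, an affine map is realized by the plane quasiconformal map with constant Beltrami dilatation $\mu\equiv k$, $0<k<1$, restricted to $\Omega:=\C\setminus E$. By Hamilton's analytic characterization of extremality \cite{hamilton} and by the characterization of unique extremality in \cite{b-l-m-m} (following \cite{markovic}), it is enough to produce a \emph{Reich sequence} for $\mu\equiv k$: a sequence $\varphi_n$ in the Bergman space $A^1(\Omega)$ of integrable holomorphic quadratic differentials with $\|\varphi_n\|_{A^1(\Omega)}=1$, with $\operatorname{Re}\int_\Omega\varphi_n\,dx\,dy\to1$ (so that $\int_\Omega(|\varphi_n|-\operatorname{Re}\varphi_n)\to0$, i.e.\ the $\varphi_n$ become positive in the mean), and with $\varphi_n\to0$ uniformly on compact subsets of $\Omega$. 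Thus the whole conjecture reduces to a construction in function theory on $\Omega$, and the hypothesis on $E$ will be used only quantitatively; in particular one may first pass to a maximal $\tfrac c2$-separated subset of $E$, so that $\Omega$ has bounded geometry at scale $c$.

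\medskip
\noindent\emph{Building blocks.} The basic integrable holomorphic quadratic differentials on $\Omega$ are the rational functions whose poles all lie in $E$ and which vanish to order at least $3$ at infinity; the smallest one attached to three non-collinear points $e_1,e_2,e_3\in E$ is
\[
\psi_{e_1e_2e_3}(z)=\frac{e_2-e_3}{\,z-e_1\,}+\frac{e_3-e_1}{\,z-e_2\,}+\frac{e_1-e_2}{\,z-e_3\,},
\]
which decays like $|z|^{-3}$, hence lies in $A^1(\Omega)$, and for which a residue computation (using $\int_{|z|<M}(z-e)^{-1}\,dx\,dy=-\pi\bar e$ when $|e|<M$) gives $\int_\C\psi_{e_1e_2e_3}\,dx\,dy=4\pi i\cdot(\text{signed area of }e_1e_2e_3)$, up to orientation. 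So each atom contributes to its total integral exactly a geometric area, although for a single atom $\int|\psi|$ strictly exceeds $|\int\psi|$; the construction must superpose $\asymp R^2$ such atoms over a region of size $R$ so that the per-atom defects cancel and the superposition becomes positive in the mean. Organizing and holomorphizing such superpositions is done with a \emph{meromorphic partition of unity}: starting from an ordinary smooth partition of unity subordinate to a bounded-geometry cell decomposition $\{Q_j\}$ of the plane, one corrects its $\bar\partial$ by Cauchy transforms whose point singularities are absorbed into the points of $E$ in the neighbouring cells, producing meromorphic functions $\rho_j$ on $\C$ with poles confined to bounded clusters of $E$, with $\sum_j\rho_j\equiv1$, each $\rho_j$ concentrated near $Q_j$ and controlled off it; the bounded geometry, which is where $c$-well distributedness enters, supplies the required $L^1$ and pointwise bounds on the corrections.

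\medskip
\noindent\emph{Assembling the Reich sequence.} Choose regions $S_n$ receding to infinity --- say dyadic annuli $S_n=\{R_n<|z|<2R_n\}$ with $R_n\to\infty$ --- so that anything built from sources inside $S_n$ automatically tends to $0$ on each fixed compact subset of $\Omega$. The natural candidate for $\varphi_n$ is the normalized holomorphic approximation of $\mathbf 1_{S_n}$ on $\Omega$: take the Bergman projection of $\mathbf 1_{S_n}$ onto the Bergman space of a large truncation of $\Omega$ containing $S_n$, and use the meromorphic partition of unity to cut it off and reassemble it as a global element of $A^1(\Omega)$, absorbing the cutoff error into poles at $E$. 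Because $\mathbf 1_{S_n}$ is already holomorphic (constant) in the bulk of $S_n$, the reproducing property of the Bergman kernel forces $\varphi_n$ to be approximately constant there and approximately $0$ off $S_n$, with all of its poles --- hence all of the misdirected part of its $L^1$-mass --- confined to neighbourhoods of the $\asymp\operatorname{Area}(S_n)/c^2$ points of $E$ inside $S_n$, of negligible total mass. Hence $\|\varphi_n\|_{A^1}\approx\int_\Omega\varphi_n\approx\operatorname{Area}(S_n)$ up to lower order, so after normalization $\operatorname{Re}\int_\Omega\varphi_n/\|\varphi_n\|_{A^1}\to1$; and since every pole of $\varphi_n$ and $1-o(1)$ of its mass sit in $S_n$, which recedes to $\infty$, we get $\varphi_n\to0$ uniformly on compacts of $\Omega$. (Equivalently, $\varphi_n$ may be taken to be a normalized superposition $\sum_{\Delta\subset S_n}\psi_\Delta$ of the triangle atoms over a bounded-geometry triangulation of $S_n$ with vertices in $E$: the interior contributions telescope, so $\int\sum\psi_\Delta=4\pi i\operatorname{Area}(S_n)$ while $\sum\psi_\Delta$ collapses to a layer of simple poles along $\partial S_n$ whose Cauchy-type field is nearly constant inside $S_n$, and the Bergman projection is exactly the device that makes ``nearly constant'' quantitative.)

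\medskip
\noindent\emph{The main obstacle.} Everything above is soft except the quantitative heart of the matter: showing that the constructed $\varphi_n$ are positive in the mean, i.e.\ that the portion of the $A^1$-mass of $\varphi_n$ on which $\arg\varphi_n$ is not close to a fixed direction is $o(\operatorname{Area}(S_n))$. The near field of each pole is irreducibly misdirected --- $a/(z-e)$ integrates to $0$ over every disk centred at $e$ --- so one must prove that the cancellation among the $\asymp\operatorname{Area}(S_n)/c^2$ atoms overwhelms the sum of the per-pole defects; this is precisely where the Bergman projection (as opposed to crude $\bar\partial$-surgery) and the sharp density bounds from $c$-well distributedness are essential, guaranteeing both that the projection of $\mathbf 1_{S_n}$ onto $A^1(\Omega)$ has size comparable to $\operatorname{Area}(S_n)$ (which fails if $E$ is too sparse on $S_n$) and that its deviation from a constant in the bulk is of lower order. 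A secondary technical point is that a meromorphic bump cannot be compactly supported, so the tails of the $\rho_j$ must be tracked precisely enough to see that their cumulative contribution to $\int|\varphi_n|$ is lower order --- again a consequence of $E$ being $c$-well distributed.
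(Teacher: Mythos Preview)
There is a genuine gap at the outset: the criterion you reduce to is the wrong one. A normalized sequence $\varphi_n\in A^1(\C\setminus E)$ with $\operatorname{Re}\int\varphi_n\to 1$ and $\varphi_n\to 0$ uniformly on compacta is a \emph{Hamilton sequence} for the constant dilatation, and its existence is equivalent (Hamilton--Krushkal--Reich--Strebel) to \emph{extremality} of the affine map, not to unique extremality. There are plane domains on which affine maps are extremal but not uniquely extremal, so your criterion cannot suffice; the characterization in \cite{b-l-m-m} is not the one you state. The operative sufficient condition is Reich's (Theorem~\ref{ReichCri} here), which demands $\phi_n(z)\to 1$ for \emph{every} $z\in\C\setminus E$, together with $\limsup_n\int(|\phi_n|-\operatorname{Re}\phi_n)<\infty$ and a uniform-integrability tail bound. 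Your construction---projecting $\mathbf 1_{S_n}$ for annuli $S_n$ receding to infinity---is engineered precisely so that $\varphi_n\to 0$ on compacta, hence can never meet the correct criterion.

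Your ingredients, however, are exactly the paper's. The paper also builds a meromorphic partition of unity by Bergman-projecting the characteristic functions of unit squares $W_{k,l}$ onto $QD^1(\C\setminus L)$, obtaining $P_{k,l}$ with $\sum_{k,l}P_{k,l}\equiv 1$ and $|P_{k,l}(z)|\le C\exp(-|z-z_{k,l}|/C)$ away from the punctures; the decay comes from Lemma~\ref{bergest} (the Bergman kernel of any hyperbolic surface decays like $\exp(-d_S)$), combined with the fact that on $\C\setminus L$ the hyperbolic and Euclidean distances are comparable off a fixed neighbourhood of $L$. The decisive difference is the assembly: rather than summing the $P_{k,l}$ over a receding region, the paper sets $\phi_n=\sum_{k,l}\alpha_{k,l}(n)P_{k,l}$ with slowly varying weights $\alpha_{k,l}(n)=(|z_{k,l}|/n+1)^{-4}$ tending to $1$ at each fixed $(k,l)$. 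Since $\sum P_{k,l}\equiv 1$, this forces $\phi_n\to 1$ pointwise, and the exponential localization of $P_{k,l}$ against the $O(1/n)$ variation of the weights yields $|\phi_n(z)-\alpha_{p,q}(n)|\le C\alpha_{p,q}(n)/n$ on $W_{p,q}\cap\Omega$, from which all three Reich conditions follow by direct summation. Your programme could plausibly be repaired by reorganizing the same building blocks toward $\phi_n\to 1$; as written it establishes only the already-known extremality of affine maps on complements of well distributed sets.
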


Well distributed sets represent a geometric generalization of the integer lattice $\Z+i\Z$, and it is to be expected that the proof that affine maps are uniquely extremal on $\C\setminus (\Z+i\Z)$ should be utilised to prove Conjecture \ref{conj}. However, the method of proof in \cite{markovic} strongly exploits the fact that $\C\setminus (\Z+i\Z)$ is an amenable cover of the square once punctures torus (compare with Mcmullen's proof of the Kra conjecture \cite{mcmullen}), and it can not be extended to the case of well distributed sets. The purpose of this paper is to develop a new approach and prove Conjecture \ref{conj}. In particular, we provide a new and much more conceptual proof of the the main result from \cite{markovic} that affine maps are uniquely extremal on the complement on the integer lattice (the old proof relies heavily on the translation invariance of the integer lattice).

\begin{theorem}\label{thm1} If $E$ is a well distributed set then  affine maps are uniquely extremal on  $\C\backslash E$.
\end{theorem}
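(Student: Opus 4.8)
## Outline of the proof

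By the Remark in the Introduction we may assume $\Belt(A)\equiv k$ for a fixed $k\in(0,1)$. Write $\Omega=\C\setminus E$, and let $A(\Omega)$ denote the Banach space of holomorphic functions $\varphi$ on $\Omega$ with $\|\varphi\|:=\iint_\Omega|\varphi|\,dx\,dy<\infty$; elements of $A(\Omega)$ have at worst simple poles along $E$. The proof has two parts. The first is a standard reduction: by the Hamilton--Krushkal condition \cite{hamilton}, \cite{r-s-0}, the analytic characterisation of unique extremality \cite{b-l-m-m}, and the Reich--Strebel main inequality, it is enough to produce a \emph{Reich sequence} for $A$, by which we mean a sequence $\varphi_n\in A(\Omega)$ converging to $1$ uniformly on compact subsets of $\C$ and satisfying $\|\varphi_n\|^{-1}\iint_\Omega\bigl(|\varphi_n|-\mathrm{Re}\,\varphi_n\bigr)\,dx\,dy\to0$. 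Such a $\varphi_n$ is, morally, an approximate extremal quadratic differential for $A$ which after normalisation degenerates only near $\infty$; feeding it into the main inequality for a competitor $g$ homotopic to $A$ with $\|\Belt(g)\|_\infty\le k$ forces $\Belt(g)=k$ almost everywhere on every disc, hence $g=A$. Everything therefore reduces to the second part: the construction of one Reich sequence.

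We realise $\varphi_n$ as a finite partial sum of a \emph{meromorphic partition of unity} adapted to $E$. Tile $\C$ by axis-parallel squares $Q$ of side $\ell$, a large multiple of $c$; by $c$-well-distributedness each $Q$, and each of its sub-squares of side $c$, meets $E$. The heart of the matter is the following \textbf{Key Lemma}: for every $\delta>0$ there exist meromorphic ``bumps'' $b_Q$, one for each cell, with simple poles only in $E\cap 3Q$ and $b_Q(z)=O(|z|^{-3})$ at $\infty$, so $b_Q\in A(\Omega)$, such that $\sum_Q b_Q\equiv1$, with $|b_Q-1|\le\delta$ on $\tfrac12 Q$, with $|b_Q|\le C$ on $Q$, and with $|b_Q(z)|\le C\bigl(\ell/\mathrm{dist}(z,Q)\bigr)^{3}$ for $\mathrm{dist}(z,Q)\ge\ell$, where $C$ depends only on $k$ and $\ell/c$ and $\delta$ may be taken of order $c/\ell$. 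Granting this, pick $R_n\to\infty$, cell sizes $\ell=\ell_n\to\infty$ with $\ell_n=o(R_n/\log R_n)$, accuracy $\delta_n$ of order $c/\ell_n$, and set $\varphi_n=\sum_{Q\cap\{|z|<R_n\}\ne\emptyset}b_Q$. This is a finite sum of elements of $A(\Omega)$; since $\sum_Q b_Q\equiv1$ we have $\varphi_n=1-\sum_{Q\cap\{|z|<R_n\}=\emptyset}b_Q$, so the decay estimate shows that $\varphi_n\to1$ uniformly on compacta and that all poles of $\varphi_n$ lie within $O(\ell_n)$ of the circle $|z|=R_n$. Summing the estimates of the Lemma over the $O((R_n/\ell_n)^2)$ cells meeting $\{|z|<R_n\}$ then gives $\|\varphi_n\|=\pi R_n^2(1+o(1))$, while the mass of $\varphi_n$ outside $\{|z|<R_n\}$ and the integral $\iint_\Omega(|\varphi_n|-\mathrm{Re}\,\varphi_n)$ are both $o(R_n^2)$; hence $\{\varphi_n\}$ is a Reich sequence.

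It remains to prove the Key Lemma, and this is where Bergman projections enter. Fix a smooth partition of unity $\{\chi_Q\}$ with $\chi_Q\equiv1$ on $(1-\eta)Q$, $\chi_Q\equiv0$ off $(1+\eta)Q$, and $\eta\ell$ of order $c$. Because the Cauchy transform of $\overline\partial\chi_Q$ equals $\chi_Q$, the $b_Q$ we want, written $b_Q=\chi_Q-u_Q$ with $\overline\partial u_Q=\overline\partial\chi_Q$ and $u_Q$ required to be holomorphic off $E\cap 3Q$ with $O(|z|^{-3})$ decay, is literally a sum of localised rational corrections. To produce $u_Q$ we fix once and for all a scale-$c$ partition of unity $\{\eta_i\}$ subordinate to discs $D_i$, choose two well-separated points $e_i\ne e_i'$ of $E$ within $O(c)$ of $D_i$ (possible by well-distributedness), and take for $S_i$ the linear right inverse of $\overline\partial$ obtained by subtracting from the Cauchy transform $T$ the pair of partial fractions at $e_i,e_i'$ matching the first two Laurent coefficients of that transform about the centre of $D_i$; the norm estimates for these operators and their assembly are carried out by means of Bergman projections. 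Put $u_Q=\sum_i S_i(\eta_i\,\overline\partial\chi_Q)$. Linearity of the $S_i$ together with $\sum_Q\overline\partial\chi_Q=0$ gives $\sum_Q u_Q=0$, hence the exact identity $\sum_Q b_Q\equiv1$; the two subtracted moments of $\overline\partial\chi_Q$ vanish because $\chi_Q$ has compact support, which accounts for the $O(|z|^{-3})$ decay of $b_Q$. A maximum-principle estimate bounds $|S_i(h)(z)|$ by $C\min\bigl(1,(c/|z-z_{D_i}|)^{3}\bigr)\,|D_i|\,\|h\|_\infty$, and summing the $O(\ell/c)$ contributions, whose discs $D_i$ lie along $\partial Q$, yields all the estimates of the Lemma uniformly in $Q$; in particular the error on $\tfrac12 Q$ is of order $c/\ell$.

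The main obstacle is the Key Lemma together with the \emph{uniformity} of its constants in $Q$. Two points are delicate. First, the poles of $b_Q$ must lie in the prescribed set $E$, not in some convenient lattice, so the localisation of singularities must be arranged with pole bookkeeping that degrades no constant; this is exactly where $c$-well-distributedness is used, as it supplies well-separated poles at scale $c$ throughout the plane with a single constant. Second, the approximation error on $\tfrac12 Q$ and the mass of $b_Q$ near $\partial Q$ must be genuinely small, of order $c/\ell_n\to0$, so that after summation over the $\asymp(R_n/\ell_n)^2$ cells meeting $\{|z|<R_n\}$ and division by $\|\varphi_n\|\asymp R_n^2$ nothing survives; extracting the sharp cubic-decay bound for $S_i$ from the Cauchy transform, rather than a lossy one, is the technical core. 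Once the Key Lemma is established, verifying that $\{\varphi_n\}$ is a Reich sequence and deducing unique extremality from the main inequality are routine.
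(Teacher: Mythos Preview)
Your reduction step mis-states what is needed. The condition you call a ``Reich sequence''---$\varphi_n\to 1$ locally uniformly together with $\|\varphi_n\|^{-1}\iint(|\varphi_n|-\mathrm{Re}\,\varphi_n)\to 0$---is only the Hamilton--Krushkal condition for the constant direction $1$; it yields extremality of the affine map, not unique extremality. Reich's sufficient criterion (Theorem~\ref{ReichCri} here) requires the much stronger bound $\limsup_n\iint(|\phi_n|-\mathrm{Re}\,\phi_n)<\infty$ \emph{without} dividing by $\|\phi_n\|$, together with the uniform-in-$n$ tail condition~(3). Your $\varphi_n$ satisfies neither: by your own estimates the deviation integral is of order $R_n\ell_n\to\infty$, and the mass of $|\varphi_n|$ on $\{|\varphi_n|\ge K\}$ is of order $R_n/K$, so the tail does not vanish uniformly in $n$. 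Feeding a mere Hamilton sequence into the main inequality does not force $\Belt(g)=k$ a.e.; that step is exactly where conditions~(2) and~(3) are used.

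The underlying reason the paper's construction succeeds and yours does not is the decay rate of the bumps. Your $\overline\partial$-correction with two matched moments gives only $|b_Q(z)|\lesssim(\ell/\mathrm{dist}(z,Q))^3$; this is too slow to couple with any smooth weight (the analogue of Lemma~\ref{mainesti} diverges, since $|\alpha_{Q}-\alpha_{Q'}|$ grows like a power of $|z_Q-z_{Q'}|$), and it forces you into hard cutoffs, which in turn produce the blow-up above. The paper instead reduces to a quasilattice $L\subset E$ and takes $P_{k,l}\,dz^2$ to be the \emph{Bergman projection on $\C\setminus L$} of the characteristic function of a unit square; the hyperbolic estimate of Lemma~\ref{bergest} then gives exponential decay $|P_{k,l}(z)|\le C\exp(-|z-z_{k,l}|/C)$ on $\Omega$, and this is what makes the smoothly weighted sum $\phi_n=\sum_{k,l}(|z_{k,l}|/n+1)^{-4}P_{k,l}$ satisfy all three Reich conditions with a uniformly bounded deviation integral. (Two smaller points: your bound $|b_Q|\le C$ on $Q$ cannot hold at the simple poles sitting along $\partial Q$, so those estimates must be rephrased in $L^1$ or away from the poles; and your invocation of ``Bergman projections'' in building the operators $S_i$ does not match the Cauchy-transform-plus-moment-matching construction you actually describe.)
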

\begin{remark} Every $c$-well distributed set can be mapped to a $\frac 18$-well distributed set by a linear isomorphism of $\C$. Since linear maps conjugate affine maps onto affine maps, it suffices to prove  Theorem \ref{thm1} for $\frac 18$-well distributed sets.
\end{remark}

\subsection{Reich sequences} The following theorem  due to Reich \cite{reich}  gives a sufficient condition for an affine map on a plane domain $M\subset \C$ to be uniquely extremal. Let $QD^1(M)$ be the Banach space of all integrable holomorphic quadratic differentials with respect to the $L^1$-norm.
\begin{theorem}\label{ReichCri}
	Let $M$ be a plane domain. An affine map on $M$ is  uniquely extremal  if there exists a sequence $\Phi_n=\phi_n\,dz^2\in QD^1(M)$ satisfying the following three conditions:
	\begin{enumerate}
		\item $\lim\limits_{n \to \infty} \phi_n(z)=1$, \,\,\,\,  $\forall z\in M$,
		\vskip .1cm
		\item $\limsup\limits_{n\to \infty} \int\limits_\C \big(|\phi_n|-Re(\phi_n)\big)|dz|^2<+\infty$,
		\item If $S_{n,K}=\{z\in M:|\phi_n(z)|\geq K\}$, then for each 
		\[
		\liminf_{K\rightarrow\infty}\mathop\int\limits_{S_{K,n}}|\phi_n(z)||dz|^2=0,
		\]
		uniformly in $n\in\mathbb N$.
		
	\end{enumerate}
\end{theorem}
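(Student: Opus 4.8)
The plan is to treat the hypothesized sequence $\{\Phi_n=\phi_n\,dz^2\}$ as a certificate and run a fundamental–inequality argument against an arbitrary competitor. By the Remark and the fact that linear maps conjugate affine maps to affine maps, I may normalize the affine map to the stretch $f_0(z)=z+k\bar z$, so that $\Belt(f_0)=k$ is the real constant $k\in(0,1)$. Unique extremality then reduces to the assertion: if $g$ is quasiconformal, homotopic to $f_0$, and $\|\Belt(g)\|_\infty\le k$, then $\mu:=\Belt(g)=k$ almost everywhere. First I would note that condition $(2)$ forces the normalized differentials $\phi_n/\|\phi_n\|_1$ to be a Hamilton sequence for $f_0$: since $\int_M(|\phi_n|-\mathrm{Re}\,\phi_n)\,|dz|^2$ stays bounded while $\|\phi_n\|_1\to\infty$, we get $\mathrm{Re}\int_M k\,\phi_n/\|\phi_n\|_1\to k$. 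Thus $f_0$ is extremal, any competitor $g$ as above is also extremal with $\|\mu\|_\infty=k$, and the whole problem becomes the rigidity statement $\mu=k$ a.e.

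The engine is the Reich--Strebel main inequality applied to the identity--homotopic map $g\circ f_0^{-1}$. Because $f_0^{-1}$ is itself affine with a real derivative ratio, the Beltrami coefficient of $g\circ f_0^{-1}$ has the clean form $\nu=\frac{\mu-k}{1-k\mu}$, and the constraint $|\mu|\le k$ translates into the pointwise fact that the image of $\{|\mu|\le k\}$ under $\mu\mapsto\nu$ is a disc internally tangent to the imaginary axis at the origin; concretely $\mathrm{Re}\,\nu\le 0$ and $|\nu|^2\le -\tfrac{2k}{1+k^2}\,\mathrm{Re}\,\nu$, with equality exactly when $\mu=k$. Testing the main inequality against $\phi_n$ and carrying out this algebra, I expect to produce an estimate of the schematic shape
\[
\int_M D(\mu)\,|\phi_n|\,|dz|^2 \;\le\; \int_M\big(|\phi_n|-\mathrm{Re}\,\phi_n\big)\,|dz|^2 \;+\; C\!\int_{S_{K,n}}\!|\phi_n|\,|dz|^2 \;+\; \varepsilon_{K,n},
\]
where $D(\mu)=\dfrac{|\mu-k|^2}{|1-k\mu|^2\,(1-|\mu|^2)}\ge 0$ vanishes precisely where $\mu=k$. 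Here condition $(1)$ is what lets me replace $\phi_n/|\phi_n|$ by $1$ in the limit, so that the deviation of $\mu$ from $k$ is detected with the correct phase; condition $(2)$ bounds the first term on the right uniformly in $n$; and condition $(3)$ makes the truncated term over $S_{K,n}$ arbitrarily small, uniformly in $n$, as $K\to\infty$ — exactly the statement that no mass of $|\phi_n|$ escapes into the region where $\phi_n$ is large. Sending $K\to\infty$ and then $n\to\infty$, the three conditions jointly control the entire right-hand side.

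The hard part, and the genuine content of the theorem, is converting this control into the pointwise identity $\mu=k$ a.e. on the noncompact domain $M$. This is precisely where the classical, \emph{normalized} Hamilton--sequence argument breaks down: dividing through by $\|\phi_n\|_1\to\infty$ dilutes any local defect, and one recovers only that $D(\mu)$ is integrable — never that it vanishes, since a bounded function supported on a finite-area set is harmlessly integrable and its $|\phi_n|$-weighted average over the diverging ambient mass tends to zero automatically. The whole reason for allowing $\Phi_n$ to \emph{degenerate} (its $L^1$-norm diverging) while keeping the deficiency $\int_M(|\phi_n|-\mathrm{Re}\,\phi_n)\,|dz|^2$ merely bounded rather than normalized is to defeat this dilution and retain enough rigidity to conclude $D(\mu)=0$ almost everywhere. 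Making this last step rigorous — producing the displayed inequality with error terms honestly governed by $(2)$ and $(3)$, interchanging the limits in $n$ and $K$ without losing the pointwise information, and ruling out a residual exceptional set — is the main obstacle I anticipate, and it is the step for which the delicate calibration of the three conditions (not just their individual qualitative meaning) is indispensable.
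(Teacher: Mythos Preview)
The paper does not prove this theorem at all: it is stated as a known result of Reich, with the citation \cite{reich}, and then used as a black box. So there is no ``paper's own proof'' to compare against; your proposal is an attempt to reconstruct Reich's original argument from the statement alone.

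That said, your sketch is along the right lines but is, by your own admission, incomplete at exactly the decisive step. The reduction to $\Belt(f_0)=k$ with $k$ real, the use of $\nu=(\mu-k)/(1-k\mu)$ and the pointwise inequality $|\nu|^2\le -\tfrac{2k}{1+k^2}\,\mathrm{Re}\,\nu$, and the identification of $D(\mu)$ as the defect functional are all correct and standard. What you have not done is produce the inequality you display: the Reich--Strebel main inequality, applied to $g\circ f_0^{-1}$ against $\phi_n$, does not immediately yield a right-hand side of the form $\int(|\phi_n|-\mathrm{Re}\,\phi_n)+C\int_{S_{K,n}}|\phi_n|+\varepsilon_{K,n}$. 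One has to expand the logarithm (or the rational integrand) carefully, split the domain into $\{|\phi_n|<K\}$ and $S_{K,n}$, and use condition (1) to control the phase error $|\phi_n|-\mathrm{Re}\,\phi_n$ on the first piece while condition (3) disposes of the second. The ``$\varepsilon_{K,n}$'' term you posit has no stated provenance, and the interchange of the $K\to\infty$ and $n\to\infty$ limits is precisely where the uniformity in condition (3) enters; you assert this but do not carry it out. Finally, your last paragraph correctly identifies that the un-normalized inequality is what gives rigidity, but you stop short of explaining why the left-hand side $\int D(\mu)|\phi_n|$ being bounded, together with $\phi_n\to 1$ pointwise, forces $D(\mu)=0$ a.e.\ via Fatou. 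Until those steps are written out, this remains a plausible outline rather than a proof.
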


Such a sequence $\Phi_n \subset QD^1(M)$ is called a Reich sequence (we also refer to the corresponding sequence of functions $\phi_n$ as the Reich sequence). It is known to be hard to construct such sequences. We  construct Reich sequences on the complements of  well distributed sets. 

\begin{remark} It follows from \cite{b-l-m-m} that the existence of a Reich  sequence is equivalent to the statement that affine maps are uniquely extremal on $M$.
\end{remark}

\subsection{Quasilattices}  By $L_0$ we denote the integer lattice in the complex plane $\C$. Points in $L_0$ is enumerated by $z_{k,l}=k+li$ where $k,l\in\Z$. 
\begin{definition} A countable subset $L$ in $\C$ is called a quasilattice if its points can be enumerated by $w_{k,l}$, where $k,l\in\Z$, and $w_{k,l}$ satisfies
\[
\big|z_{k,l}-w_{k,l}\big|\leq\frac 18.
\]
\end{definition}
It's easy to observed that a quasilattice is a well distributed set. On the other hand, it will be proved in the following lemma that, any $\frac 18$-well distributed set $E$ will contains a quasilattice $L$ as subset.  Then a Reich sequence for $\C\setminus L$ is also a Reich sequence for $\C\setminus E$, since $QD^1(\C\setminus L)\subset QD^1(\C\setminus E)$. Thus,  it suffices to construct a Reich sequence $\phi_n$ for every plane domain $\C\setminus L$ which is a complement of a quasilattice.
\begin{lemma}\label{qlinE}
Every $\frac 18$-well distributed set $E$ contains a quasilattice $L\subset E$.
\end{lemma}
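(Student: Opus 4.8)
The plan is elementary: I will produce the quasilattice $L$ by choosing, for each index $(k,l)\in\Z^2$, a point of $E$ close to the integer lattice point $z_{k,l}=k+li$, and then verify that distinct indices yield distinct points, so that the resulting enumeration legitimately defines a quasilattice.

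First, fix $(k,l)\in\Z^2$ and let $D_{k,l}$ be the Euclidean disc of radius $\tfrac18$ centred at $z_{k,l}$. Since $E$ is $\tfrac18$-well distributed, $E\cap D_{k,l}\neq\emptyset$, and I select a point $w_{k,l}\in E\cap D_{k,l}$ (simultaneously for all $(k,l)$, which is harmless since $E$, being discrete, is countable). By construction $|z_{k,l}-w_{k,l}|\le\tfrac18$, which is exactly the inequality demanded of a quasilattice.

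Second, I must check that $(k,l)\mapsto w_{k,l}$ is injective, for otherwise the enumeration does not describe a quasilattice. If $(k,l)\neq(k',l')$ then $|z_{k,l}-z_{k',l'}|\ge 1$, whereas $\tfrac18+\tfrac18=\tfrac14<1$; hence the discs $D_{k,l}$ and $D_{k',l'}$ are disjoint, so $w_{k,l}\neq w_{k',l'}$. Consequently $L:=\{w_{k,l}:(k,l)\in\Z^2\}\subset E$ is a set enumerated by $\Z^2$ with $|z_{k,l}-w_{k,l}|\le\tfrac18$ for all $k,l$, i.e.\ a quasilattice contained in $E$.

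I do not expect any genuine obstacle: the statement is a packing observation, and the only point requiring a word of justification is the distinctness of the chosen points, which reduces to the trivial inequality $\tfrac14<1$ separating the discs around distinct integer lattice points. (If one wishes, one can also note in passing that the same disjointness shows the $w_{k,l}$ are $\tfrac34$-separated, so that $\C\setminus L$ is a plane domain of the kind to which Reich's criterion, Theorem \ref{ReichCri}, will subsequently be applied.)
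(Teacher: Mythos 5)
Your proof is correct and follows essentially the same argument as the paper: pick $w_{k,l}\in E\cap \D_{\frac18}(z_{k,l})$ using the $\frac18$-well distributed hypothesis, and note that the discs around distinct lattice points are disjoint, so the chosen points are distinct and form a quasilattice contained in $E$. Your only addition is spelling out the disjointness inequality $\frac18+\frac18<1$, which the paper leaves implicit.
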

\begin{proof}
	By $\D_{\frac 18}(z)$ we denote the Euclidean disk of radius $\frac 18$, centered at $z$. Each disk $\D_{\frac 18}(z)$ contains at least one point from $E$. Therefore, we can chose points
	\[
	w_{k,l}\in\D_\frac 18(z_{k,l})\cap E
	\]
	Points $w_{k,l}$ are mutually different since they live in disjoint disks. Moreover, by construction $|w_{k,l}-z_{k,l}|<\frac 18$. Therefore $L=\{w_{k,l}:k,l\in\mathbb Z\}$ is a quasilattice.
\end{proof}

\subsection{Meromorphic partition of unity} 
We let 
\begin{equation}\label{eq-0}
\Omega=\{z\in\C:d(z,L_0)\geq\frac 14\}.
\end{equation}
Note that if $L$ is a quasilattice then $\Omega\subset\C\setminus L$. 

\begin{definition}\label{def-part}
Let $L$ be a quasilattice.  Let $\{P_{p,q}\}$, $p,q\in\Z$,  be a double sequence of meromorphic functions such that each $P_{p,q}\,dz^2 \in QD^1(\C\setminus L)$. We say that $P_{p,q}$ is a meromorphic partition of unity if the following holds:
\begin{enumerate}
	\item 
	There exist constant $C$ such that for any $z\in\Omega$, and any $p,q\in\Z$, we have
	\[
		\bigg|P_{p,q}(z)\bigg|\leq\frac{C}{\exp\left(\frac{|z-z_{p,q}|}{C}\right)}.
		\]
		\item 
		$$
		\sum_{k,l}P_{k,l}(z)=1,\,\,\,\,\,\,\,\, z\in \C\setminus L.
		$$ 
	\end{enumerate}
\end{definition}
\begin{remark} It follows from the the estimate (1)  that the sequence in (2) absolutely converges for each $z\in \C\setminus L$.
\end{remark}
Constructing such a  meromorphic partition of unity is a key idea behind constructing  a Reich sequence as illustrated by the following theorem.
\begin{theorem}\label{thm3}
	Assume $L$ is a quasilattice equipped with a meromorphic partition of unity $P_{p,q}$, $p,q\in \Z$. Set
	\[
	\phi_n(z)=\sum\limits_{k,l} \frac{P_{k,l}(z)}{\big(|\frac{z_{k,l}}{n}|+1\big)^4 }
	\]
	for $z \in \C\setminus L$.	 Then $\phi_n$ is a Reich sequence.
\end{theorem}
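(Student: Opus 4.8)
The plan is to check that $\phi_n\,dz^2\in QD^1(\C\setminus L)$ and then verify, in order, the three conditions of Theorem~\ref{ReichCri}. Write $a^{(n)}_{k,l}=\bigl(|z_{k,l}/n|+1\bigr)^{-4}$, so $0<a^{(n)}_{k,l}\le 1$, $a^{(n)}_{k,l}\to 1$ as $n\to\infty$ for each fixed $(k,l)$, the array $(a^{(n)}_{k,l})$ is nonincreasing in $|z_{k,l}|$, and $\sum_{k,l}a^{(n)}_{k,l}$ is of order $n^2$. Two preliminary facts make the hypotheses usable. First, integrability of $P_{k,l}\,dz^2$ on $\C\setminus L$ forces $P_{k,l}$ to have at most a simple pole at each point of $L$. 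Second, for each $(p,q)$ the point $w_{p,q}$ is the only point of $L$ in the disc $\D_{3/8}(z_{p,q})$, and $\partial\D_{3/8}(z_{p,q})\subset\Omega$, so the Cauchy estimate together with estimate~(1) of Definition~\ref{def-part} gives, for every $(k,l)$, $|P_{k,l}(z)|\le C_0|z-w_{p,q}|^{-1}e^{-|z_{k,l}-z_{p,q}|/C}$ on $\D_{3/8}(z_{p,q})$, and $|P_{k,l}(z)|\le C_0 e^{-|z_{k,l}-z_{p,q}|/C}$ on $Q_{p,q}\setminus\D_{1/4}(z_{p,q})$, where $Q_{p,q}$ is the closed unit square centred at $z_{p,q}$. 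From these one reads off $\sup_{k,l}\|P_{k,l}\|_{L^1(\C\setminus L)}<\infty$, $\sum_{k,l}|P_{k,l}(z)|<\infty$ for every $z\in\C\setminus L$, hence $\|\phi_n\|_{L^1}\le\bigl(\sup_{k,l}\|P_{k,l}\|_1\bigr)\sum_{k,l}a^{(n)}_{k,l}<\infty$, so $\phi_n\,dz^2\in QD^1(\C\setminus L)$. Condition~(1) follows immediately: $\phi_n(z)-1=\sum_{k,l}(a^{(n)}_{k,l}-1)P_{k,l}(z)$, and since $|a^{(n)}_{k,l}-1|\le 1$, $a^{(n)}_{k,l}\to1$, and $\sum_{k,l}|P_{k,l}(z)|<\infty$, dominated convergence gives $\phi_n(z)\to 1$.

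Next I localize. On $Q_{p,q}$, using $\sum_{k,l}P_{k,l}\equiv 1$, write $\phi_n=a^{(n)}_{p,q}+\psi_{p,q}$ with $\psi_{p,q}=\sum_{(k,l)}(a^{(n)}_{k,l}-a^{(n)}_{p,q})P_{k,l}$, and set
\[
\Sigma_{p,q}=\sum_{k,l}\bigl|a^{(n)}_{k,l}-a^{(n)}_{p,q}\bigr|\,e^{-|z_{k,l}-z_{p,q}|/C}.
\]
The bounds on the $P_{k,l}$ give $|\psi_{p,q}(z)|\le C_1\Sigma_{p,q}$ on $Q_{p,q}\setminus\D_{3/8}(z_{p,q})$; moreover $\psi_{p,q}$ extends meromorphically across $w_{p,q}$ with at most a simple pole there, $|\operatorname{Res}_{w_{p,q}}\phi_n|=|\operatorname{Res}_{w_{p,q}}\psi_{p,q}|\le C_2\Sigma_{p,q}$, and $|\psi_{p,q}(z)|\le C_1\Sigma_{p,q}|z-w_{p,q}|^{-1}$ on $\D_{3/8}(z_{p,q})$. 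The arithmetic backbone of everything below is the claim that $\sum_{p,q}\Sigma_{p,q}^2$ and $\sum_{p,q}\Sigma_{p,q}^2/a^{(n)}_{p,q}$ are bounded independently of $n$. One proves this by splitting at $|z_{p,q}|=n$: for $|z_{p,q}|\le n$ one has $a^{(n)}_{p,q}$ of order $1$ and, from $|a^{(n)}_{k,l}-a^{(n)}_{p,q}|\le\tfrac4n|z_{k,l}-z_{p,q}|$, $\Sigma_{p,q}=O(n^{-1})$, and there are $O(n^2)$ such $(p,q)$; for $|z_{p,q}|>n$ a sharper mean value estimate gives $\Sigma_{p,q}=O\bigl(a^{(n)}_{p,q}/|z_{p,q}|+e^{-|z_{p,q}|/(4C)}\bigr)$, and the resulting sums converge by comparison with $n^4\int_n^\infty r^{-5}\,dr$.

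Granting this, condition~(3) is quick. With $\Lambda:=\sup_{p,q}\bigl(a^{(n)}_{p,q}+C_1\Sigma_{p,q}\bigr)<\infty$, if $|\phi_n(z)|\ge K>\Lambda$ then $z\in\D_{3/8}(z_{p,q})$ for a unique $(p,q)$ and, from $|\phi_n(z)|\le \Lambda+C_1\Sigma_{p,q}|z-w_{p,q}|^{-1}$, one has $|z-w_{p,q}|\le C_1\Sigma_{p,q}/(K-\Lambda)$; integrating this bound over the resulting punctured discs yields $\int_{S_{K,n}}|\phi_n|\le C\,(K-\Lambda)^{-1}\sum_{p,q}\Sigma_{p,q}^2$, which tends to $0$ as $K\to\infty$ uniformly in $n$. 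For the main part of condition~(2), use $|\phi_n|-\operatorname{Re}\phi_n=(\operatorname{Im}\psi_{p,q})^2/(|\phi_n|+\operatorname{Re}\phi_n)$: on the set where $|\psi_{p,q}|\le a^{(n)}_{p,q}/2$ one has $\operatorname{Re}\phi_n\ge a^{(n)}_{p,q}/2$ and hence $|\phi_n|-\operatorname{Re}\phi_n\le 2|\psi_{p,q}|^2/a^{(n)}_{p,q}$; integrating this over $Q_{p,q}\setminus\D_{3/8}(z_{p,q})$, and over the portion of $\D_{3/8}(z_{p,q})$ with $|z-w_{p,q}|$ larger than a multiple of $|\operatorname{Res}_{w_{p,q}}\phi_n|/a^{(n)}_{p,q}$, contributes $O(\Sigma_{p,q}^2/a^{(n)}_{p,q})$ per square, which sums.

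The delicate point — and where I expect the real work to be — is the contribution to condition~(2) of a neighbourhood of each pole of $\phi_n$. Near $w_{p,q}$ one has $\phi_n=a^{(n)}_{p,q}+r_{p,q}(z-w_{p,q})^{-1}+O(\Sigma_{p,q})$ with $r_{p,q}=\operatorname{Res}_{w_{p,q}}\phi_n$, and a direct computation shows that the transition annulus $|r_{p,q}|/a^{(n)}_{p,q}\ll|z-w_{p,q}|\ll 1$ contributes an amount of order $(|r_{p,q}|^2/a^{(n)}_{p,q})\log(a^{(n)}_{p,q}/|r_{p,q}|)$ to $\int_{Q_{p,q}}(|\phi_n|-\operatorname{Re}\phi_n)$. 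With only the crude bound $|r_{p,q}|\le C_2\Sigma_{p,q}=O(n^{-1})$ on the central squares this sums to a quantity of order $\log n$ over the $O(n^2)$ of them, which is not admissible; so the argument must upgrade the residue bound to $|r_{p,q}|=O(n^{-2})$. This cannot follow from slow variation of the coefficients $a^{(n)}_{k,l}$ alone — it requires the cancellation $\sum_{k,l}z_{k,l}\operatorname{Res}_{w_{p,q}}P_{k,l}=0$, equivalently that the meromorphic function $\sum_{k,l}z_{k,l}P_{k,l}(z)$ be holomorphic at every point of $L$ (it should in fact equal the coordinate function $z$, a reproduction property of the Bergman‑projection construction that underlies the meromorphic partition of unity). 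With that in hand, $a^{(n)}_{k,l}-a^{(n)}_{p,q}$ enters $r_{p,q}$ only through its second‑order Taylor remainder, giving $|r_{p,q}|=O\bigl(n^4(|z_{p,q}|+n)^{-6}\bigr)$, so $(|r_{p,q}|^2/a^{(n)}_{p,q})\log(a^{(n)}_{p,q}/|r_{p,q}|)=O\bigl(a^{(n)}_{p,q}\,n^{-4}\log n\bigr)$ sums to $O(n^{-2}\log n)\to 0$, and condition~(2) is established. Assembling the three conditions and invoking Theorem~\ref{ReichCri} completes the proof.
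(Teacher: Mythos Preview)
The paper and you agree on the $\Omega$-analysis: both localize via $\phi_n=\alpha_{p,q}(n)+O(\alpha_{p,q}(n)/n)$ on $W_{p,q}\cap\Omega$ (your $\psi_{p,q}$ estimate is the paper's Lemma~\ref{mainesti}), which handles condition~(1), the $\Omega$-part of condition~(2), and condition~(3). The difference is entirely in the near-pole contribution to condition~(2). The paper never isolates the residue; it invokes a single local black-box estimate, Lemma~\ref{localesti} (quoted from \cite{markovic}): for $f$ meromorphic on $\overline{\D_{\frac12}}$ with a simple pole at $0$ and $|f-1|\le\epsilon$ on $\partial\D_{\frac12}$, one has $\int_{\D_{\frac12}}(|f|-\operatorname{Re}f)\le C\epsilon^2$. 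Applied to $f_{n,k,l}=\phi_n(\cdot+w_{k,l})/\alpha_{k,l}(n)$ with $\epsilon=C_1/n$, this gives a per-disc contribution $O(\alpha_{k,l}(n)/n^2)$, and the sum over $(k,l)$ is a convergent Riemann sum for $\int_{\C}(|z|+1)^{-4}\,|dz|^2$. Thus the paper's argument stays entirely within the hypotheses of Definition~\ref{def-part}.

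Your route analyses the pole by hand, and the gap is in your resolution of the logarithmic term. You assert that summability forces the residue upgrade $|r_{p,q}|=O(n^{-2})$, which in turn requires the first-moment cancellation $\sum_{k,l}z_{k,l}\operatorname{Res}_{w_{p,q}}P_{k,l}=0$. But this cancellation is \emph{not} part of Definition~\ref{def-part}, so invoking it does not prove Theorem~\ref{thm3} from its stated hypotheses. Your suggested justification---that $\sum_{k,l}z_{k,l}P_{k,l}(z)=z$ by the Bergman reproducing property---does not go through either: the hyperbolic density $\rho_L$ is comparable to $1$ on $\Omega$, so $\rho_L^{-2}|z|$ is unbounded and $z\,dz^2\notin QD^\infty(\C\setminus L)$; the reproducing formula is therefore unavailable for this differential, and you have not verified the claimed cancellation even for the concrete construction of Section~\ref{sec-berg-1}. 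In short, the paper absorbs the entire near-pole difficulty into the cited local lemma and needs nothing beyond the two axioms of a meromorphic partition of unity, whereas your direct computation leads you to impose an additional, unproven structural condition on the $P_{k,l}$.
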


\subsection{Organization} In the next two section we construct a meromorphic partition of unity $P_{p,q}$ on $\C\setminus L$. In Section \ref{sec-berg} we recall the notion of the Bergman projection. The main result of this section is the estimate in Lemma \ref{bergest}.
In Section \ref{sec-berg-1} we define the double sequence $P_{p,q}$ using the Bergman  projections of characteristic functions of squares centred at the lattice points $z_{p,q}$. Theorem \ref{thm3} is proved in Section \ref{sec-reich} by elementary computations.

\section{The Bergman kernel}\label{sec-berg}
As preliminary, we recall the definition of the Bergman kernel for any hyperbolic Riemann surface and state some of its well known  properties (see Section 12 in \cite{f-m} for example). 
The Bergman kernel function on the unit disc $\D$ is given by
\[
K(z,w)=\frac{1}{(1-z\overline w)^4}.
\]
The Bergman kernel $B_\D$ is the differential form given by 
\[
B_\D(z,w)=K(z,w)\, dz^2d\overline w^2.
\]
By direct computation one finds that  the Bergman kernel $B_\D$ is M\"obius transformation invariant, that is, for $A\in Aut(\D)$ we have
\begin{equation}\label{eq-inv}
K(Az,Aw)A'(z)^2\overline{A'(w)}^2=K(z,w).
\end{equation}
Another key property of this kernel is the following integral identity (also proved by elementary computation). Namely, let $W\subset \D$, and let $w$ be the point such that $A(w)=0$. Then
\begin{equation}\label{eq-inv-1}
\rho^2_\D(w)\int\limits_{A(W)} \, |dz|^2=\int\limits_{W} |K(z,w)|\, |dz|^2,
\end{equation}
where
$$
\rho_\D(w)=\frac{1}{(1-|w|^2)}
$$
is the density of the hyperbolic metric on $\D$.

\subsection{The Bergman projection} A word on notation first. Suppose $\omega$ is a volume form on $X\times X$ obtained as a product of two volume forms on $X$. Let $(z,w)$ denote local coordinates in $X\times X$. If $Y\subset X$, then the integral
$$
\int\limits_{Y_{z}} \omega
$$ 
is a volume form on $X$ (with local coordinate $w$), that is the notation $Y_z$ means that we are integrating with respect to the $z$ variable. And likewise for $Y_w$. Also, if we fix a point $p\in X$ then $\omega(p,w)$ is volume form with respect to $w\in X$ (this is the evaluation of $\omega$ at $(p,w)$). And likewise for  $\omega(z,p)$.

To each measurable quadratic differential $f$ on $\D$, we associate the holomorphic quadratic differential $f*B_\D$ on $\D$ given by
\begin{equation}\label{Bergmanconv-0}
f*B_\D=\frac{3}{2\pi}\int\limits _{\D_{w}} f(w) dV^{-1}_\D(w) B_\D(z,w),
\end{equation}
where the tensor $dV^{-1}_\D$ is given by
$$
dV^{-1}_\D(w)=\frac{1}{\rho^{2}_\D(w)|dw|^2}.
$$
This convolution formula is well defined when $f$ has a finite $L^1$
norm in which case $f*B_\D\in QD^1(\D)$,  or if $f$ has a finite Bers norm in which case $f*B_\D\in QD^\infty(\D)$, where 
$\Phi=\phi\, dw^2\in QD^\infty(\D)$ if
$$
\sup_{w\in\D} \rho^{-2}(w)|\phi(w)|<\infty.
$$
 If $f\in QD^1(\D)$, or $f\in QD^\infty(\D)$, then $f*B_\D\equiv f$.

Let $S$ be a hyperbolic Riemann surface given as the quotient $S=\mathbb D / \Gamma$, where $\Gamma$ is a Fuchsian group. We use $z$ and $w$ to denote the local coordinates on both $\D$ and $S$.  
The differential form $B_S$, called the Bergman kernel on the surface $S$, is defined by Poincare series
\[
B_S(z,w)=\sum_{A\in\Gamma}K(Az,w)A'(z)^2.
\]
The Poincare series is  absolutely convergent since for a fixed $w\in\D$ the integral of $|K|$ over $z\in\D$  is bounded. 
For any measurable quadratic differential $f$ on  $S$ we define the Bergman projection by
\begin{equation}\label{Bergmanconv}
f*B_S=\frac{3}{2\pi}\int\limits _{S_{w}} f(w) dV^{-1}_S(w) B_S(z,w),
\end{equation}
with the tensor
$$
dV^{-1}_S(w)=\frac{1}{dV_S(w)},
$$
where $dV_S(w)=\rho^2_S(w)|dw|^2$ is the volume form of the hyperbolic metric on $S$ (here $\rho_S$ is the density of the hyperbolic metric on $S$). This convolution formula is well defined when $f$ has a finite $L^1$
norm in which case $f*B_S\in QD^1(S)$,  or if $f$ has a finite Bers norm in which case $f*B_S\in QD^\infty(S)$. If $f\in QD^1(S)$, or $f\in QD^\infty(S)$, then $f*B_S\equiv f$. The following lemma is our main estimate about the Bergman kernel. 
\par 
\begin{lemma}\label{bergest}
	Let $S$ denote a hyperbolic Riemann surface. Given  any subset $U\subset S$, and $p \in S$, we have
	\[
	\int\limits_{U_{z}}|B_S(z,p)|  \leq \frac{4\pi }{\exp\big(d_S(U,p)\big)}dV_S(p),
	\]
	where $d_S(U,p)$ denotes the hyperbolic distance between $U$ and $p$.
		\end{lemma}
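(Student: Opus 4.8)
The plan is to pull the whole estimate back to the universal cover $\D$, unfold the Poincar\'e series defining $B_S$, and then reduce to the elementary identity \eqref{eq-inv-1} together with a computation of the Euclidean area of the complement of a hyperbolic disc.

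First I would fix a holomorphic universal covering $\pi\colon\D\to S=\D/\Gamma$, a lift $\tilde p\in\pi^{-1}(p)$, and a measurable fundamental domain $F$ for $\Gamma$. Put $\widehat U=F\cap\pi^{-1}(U)$, so that $\pi$ restricts to a measure-preserving bijection $\widehat U\to U$ (up to a null set), while the $\Gamma$-translates $\{A(\widehat U)\}_{A\in\Gamma}$ tile $\pi^{-1}(U)$ up to a null set. Since the series $\sum_{A\in\Gamma}K(Az,w)\,A'(z)^2\,dz^2d\overline w^2$ is invariant under $\Gamma$ acting separately in $z$ and in $w$ and represents $B_S$, it is the $\pi$-pullback of $B_S$. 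Writing $K_S(z,w)=\sum_{A\in\Gamma}K(Az,w)A'(z)^2$ and recalling that $\pi$ pulls $dV_S$ back to $\rho_\D^2|dz|^2$, the lemma becomes the inequality of densities
\[
\int\limits_{\widehat U}\bigl|K_S(z,\tilde p)\bigr|\,|dz|^2\;\le\;\frac{4\pi}{\exp\bigl(d_S(U,p)\bigr)}\,\rho_\D^2(\tilde p),
\]
whose two sides are independent of the choice of lift $\tilde p$.

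Next I would apply the triangle inequality and Tonelli to the left-hand side, change variables $u=Az$ in each summand, and use the tiling property to collapse the resulting sum of integrals over the sets $A(\widehat U)$ into a single integral over $\pi^{-1}(U)$:
\[
\int\limits_{\widehat U}\bigl|K_S(z,\tilde p)\bigr|\,|dz|^2\;\le\;\sum_{A\in\Gamma}\int\limits_{\widehat U}\bigl|K(Az,\tilde p)\bigr|\,|A'(z)|^2\,|dz|^2\;=\;\int\limits_{\pi^{-1}(U)}\bigl|K(u,\tilde p)\bigr|\,|du|^2 .
\]
(The same unfolding, applied to $F$ in place of $\widehat U$, shows the integrand on the middle term is finite a.e., so Tonelli is legitimate.) Now choose $\widehat A\in\mathrm{Aut}(\D)$ with $\widehat A(\tilde p)=0$; the identity \eqref{eq-inv-1} with $W=\pi^{-1}(U)$ turns the last integral into $\rho_\D^2(\tilde p)$ times the Euclidean area of $\widehat A\bigl(\pi^{-1}(U)\bigr)$.

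Finally I would estimate that area. Since $\widehat A$ is a hyperbolic isometry carrying $\tilde p$ to $0$ and $d_\D\bigl(\pi^{-1}(U),\tilde p\bigr)=d_S(U,p)=:d$, every point of $\widehat A\bigl(\pi^{-1}(U)\bigr)$ lies at hyperbolic distance at least $d$ from $0$, i.e.\ $\widehat A\bigl(\pi^{-1}(U)\bigr)\subset\{\,\tanh d\le|z|<1\,\}$; hence its Euclidean area is at most $\pi(1-\tanh^2 d)=\pi\,\mathrm{sech}^2 d\le 4\pi e^{-2d}\le 4\pi e^{-d}$. Chaining the displays gives the claimed bound. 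The only genuinely delicate point is the unfolding step: one has to justify the fundamental-domain decomposition so that the $\Gamma$-translates of $\widehat U$ tile $\pi^{-1}(U)$ up to measure zero, and keep careful track of the differentials $dz^2$ and $d\overline w^2$ under the pullback and under the changes of variable. Everything downstream is just \eqref{eq-inv-1} and the area of the complement of a hyperbolic disc.
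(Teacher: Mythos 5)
Your proof is correct and follows essentially the same route as the paper: unfold the Poincar\'e series for $B_S$ over a fundamental domain via the triangle inequality, collapse the translates into a single integral over $\pi^{-1}(U)$, apply the invariance identity \eqref{eq-inv-1} with a M\"obius map sending the lift of $p$ to $0$, and bound the Euclidean area of the complement of the hyperbolic disc of radius $d_S(U,p)$. The only (harmless) difference is the normalization of the hyperbolic metric --- you place $\widehat A(\pi^{-1}(U))$ in $\{|z|\ge\tanh d\}$ while the paper uses $\{|z|\ge\tanh(d/2)\}$ --- and either way the area is at most $4\pi e^{-d}$, so the stated bound follows.
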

	\begin{remark} The left hand side in the above inequality is a volume form on $S$ (in the coordinate $p$). Thus, the inequality represents a pointwise comparison between two volume forms on $S$.
	\end{remark}
\begin{proof}
Let $V\subset \D$ be a  fundamental domain for $U\subset S$. Let  $w_p \in \D$ denote a lift of $p$. Then 
\begin{align}\label{M1}
	\int\limits_{U_{z}}|B_S(z,p)| &=\left( \int\limits_{V}\bigg|\sum_{A\in\Gamma} K(Az,w_p)A'(z)^2 \bigg|\,|dz|^2\right)|dw|^2 \notag \\ &\leq \left( \sum_{A\in\Gamma}\int\limits_{A(V)}|K(z,w_p)|\, |dz|^2\right)|dw|^2\\
	&=\left(\int\limits_{\Gamma(V)}|K(z,w_p)|\,|dz|^2\right)|dw|^2 \notag.
\end{align}	

	Let $T\in Aut(\D)$ be the M\"obius transformation which maps $w_p$ to $0$. By replacing $W=\Gamma(V)$ in the invariance formula (\ref{eq-inv-1}),  we obtain
	\begin{equation}\label{M2}
			\int\limits_{\Gamma(V)}|K(z,w_p)|\, |dz|^2=\rho^2_\D(w_p)\int\limits_{T(\Gamma(V))} \, |dz|^2.
	\end{equation}
	
	Now, we have  $d_S(U,p)=d_\D(\Gamma(V),w)=
	d_\D(T\big(\Gamma(V)),0\big)$. 
	Thus, $T\big(\Gamma(V)\big)$ is contained in the set
	\[\left\{\xi \in\D: |\xi| \geq \tanh\left(\frac {d_S(U,p)}{2}\right)\right\}.\]
	We find
	\begin{equation*}
		\begin{aligned}
		\int\limits_{T(\Gamma(V))} \,|dz|^2 \leq \int\limits_{|\xi|\geq \tanh\big(\frac {d_S(U,p)}{2}\big)}\, |d\xi|^2=
		\frac{4\pi}{\cosh^2\big(\frac {d_S(U,p)}{2}\big)}\leq\frac{4\pi}{\exp\big(d_S(U,p)\big)}.
		\end{aligned}
	\end{equation*}
	Combining this with the equations (\ref{M1}) and (\ref{M2}) completes the proof.
\end{proof}

\section{The Bergman projection and the meromorphic partition of unity}\label{sec-berg-1}

In this  section we  construct an explicit meromorphic partition of unity for any quasilattice $L$. By $B_L$ we denote the Bergman kernel for the Riemann surface $\C\setminus L$.  To each measurable quadratic differential $f$ on $\C\setminus L$, we associate the Bergman  projection  defined by
\[
f*B_L=\frac{3}{2\pi}\int\limits_{(\C\setminus L)_{w}} f(w)dV^{-1}_L(w)B_L(z,w).
\]
In the next subsection we prove:
 \begin{lemma}\label{Pestimate} There exist a constant $C$ with the following properties.  Set $W=[-\frac 12,\frac 12]\times [-\frac 12,\frac 12]$, and consider the measurable quadratic differential $f(z)=\chi(z)\,dz^2$, where $\chi$ is the characteristic function of $W$. Then for any quasilattice $L$, and any $z_0\in \Omega$, we have 
	\[
	\left|\frac{(f*B_L)}{|dz|^2}\right|(z_0)\leq\frac{C}{\exp\big(\frac{|z_{0}|}{C}\big)}.
	\]
\end{lemma}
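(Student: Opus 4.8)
The plan is to unwind the Bergman projection into an ordinary integral and then bound it with three ingredients: the exponential decay of the Bergman kernel from Lemma~\ref{bergest}, a lower bound $\rho_L \ge c_0 > 0$ on the density of the hyperbolic metric of $\C\setminus L$ that is \emph{uniform} over all quasilattices $L$, and domain monotonicity of the hyperbolic metric.

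First I would write $B_L(z,w) = K_L(z,w)\,dz^2\,d\overline w^2$ in the standard coordinate and substitute $f=\chi\,dz^2$ into the definition of the Bergman projection (as in (\ref{Bergmanconv})). In the integrand $f(w)\,dV_L^{-1}(w)\,B_L(z,w)$ the tensor $dV_L^{-1}(w)$ cancels the factor $\rho_L^{-2}(w)$ against the hyperbolic area element and turns $dw^2\,d\overline w^2$ into the Euclidean area form, so that
\[
f*B_L = \frac{3}{2\pi}\left( \int_{W} \frac{K_L(z,w)}{\rho_L^2(w)}\,|dw|^2 \right) dz^2 ,
\]
and therefore
\[
\left|\frac{f*B_L}{|dz|^2}\right|(z_0) \le \frac{3}{2\pi}\int_{W}\frac{|K_L(z_0,w)|}{\rho_L^2(w)}\,|dw|^2 \qquad (z_0\in\C\setminus L).
\]
Since the Bergman kernel is Hermitian, $\overline{K_L(z,w)}=K_L(w,z)$, so $|K_L(z_0,w)| = |K_L(w,z_0)|$, and Lemma~\ref{bergest} with $S=\C\setminus L$, $U=W$ (with the single point $W\cap L$ deleted), and $p=z_0$ gives $\int_W |K_L(z_0,w)|\,|dw|^2 \le 4\pi\,\rho_L^2(z_0)\,e^{-d_L(W,z_0)}$.

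It then remains to produce the two quantitative inputs. For the upper bound on $\rho_L(z_0)$: a point $z_0\in\Omega$ satisfies $d(z_0,L)\ge\frac14-\frac18=\frac18$, hence $\D_{\frac18}(z_0)\subset\C\setminus L$ and monotonicity yields $\rho_L(z_0)\le\rho_{\D_{\frac18}(z_0)}(z_0)=8$. For the uniform lower bound $\rho_L\ge c_0$: the points of a quasilattice are at mutual distance $\ge\frac34$, and every point of $\C$ has at least two points of $L$ within an absolute radius; hence $\C\setminus L\subset\C\setminus\{w,w'\}$ for the two nearest points $w,w'\in L$ of a given $z$, and monotonicity reduces matters to a lower bound on the thrice-punctured sphere metric evaluated at $(z-w)/(w'-w)$, a point ranging over a fixed compact subset of $\C\setminus\{0,1\}$; since that metric is continuous and blows up at the punctures, the bound is uniform in $L$. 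Applying $\rho_L\ge c_0$ along any path in $\C\setminus L$ from $W$ to $z_0$ gives $d_L(W,z_0)\ge c_0\,\mathrm{dist}(z_0,W)\ge c_0(|z_0|-\tfrac{\sqrt2}{2})$, and $\rho_L\ge c_0$ also bounds $\rho_L^{-2}(w)\le c_0^{-2}$ on $W$. Combining everything,
\[
\left|\frac{f*B_L}{|dz|^2}\right|(z_0) \le \frac{3}{2\pi c_0^2}\cdot 4\pi\cdot 64\cdot e^{-c_0(|z_0|-\sqrt2/2)} = \frac{384\,e^{c_0/\sqrt2}}{c_0^2}\,e^{-c_0|z_0|},
\]
and taking $C\ge\max\{384\,c_0^{-2}e^{c_0/\sqrt2},\,c_0^{-1}\}$ turns this into $\dfrac{C}{\exp(|z_0|/C)}$, as claimed.

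The main obstacle is the uniform lower bound $\rho_L\ge c_0$: it is the only step that genuinely uses the geometry of quasilattices (uniform separation of points and uniform covering radius), and one must check that the comparison constant does not degenerate as $L$ varies over all quasilattices. Everything else is formal once the conventions are fixed, the only bookkeeping being the tensor identities in the first step and the use of Hermitian symmetry to place the integration variable of Lemma~\ref{bergest} in the correct slot.
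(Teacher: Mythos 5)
Your argument is correct, but it takes a genuinely different route from the paper. The paper never integrates the kernel in the $w$-slot: instead it first replaces the value $h(z_0)$ of $h=(f*B_L)/dz^2$ by its average over the Euclidean disc $D=\D_{\frac18}(z_0)\subset\C\setminus L$ (mean value property), and then applies Lemma~\ref{bergest} with $U=D$ and $p=w$, so that the factor $dV_L(w)$ produced by the lemma cancels the tensor $dV_L^{-1}(w)$ in the definition of the projection. This cancellation means the paper needs \emph{no} bound on $\rho_L$ at $w\in W$ and no upper bound at $z_0$; all that remains is a lower bound on $d_L(D,W)$, which it obtains from a density lower bound $s_0$ valid only on $\Omega$ (injectivity radius plus Koebe $1/4$) together with the geometric observation that any arc between two points spends a definite fraction of its Euclidean length in $\Omega$, giving the estimate (\ref{eq-j2}). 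You instead apply Lemma~\ref{bergest} directly at $p=z_0$ after invoking Hermitian symmetry $\overline{K_L(z,w)}=K_L(w,z)$, which forces you to pay for $\rho_L^2(z_0)$ (handled by monotonicity, $\rho_L(z_0)\le 8$ on $\Omega$) and for $\rho_L^{-2}(w)$ on $W$ plus the distance estimate, both handled by your \emph{global, uniform} lower bound $\rho_L\ge c_0$ via comparison with twice-punctured planes. Two remarks on your version: the Hermitian symmetry is not stated in the paper, but it does hold for the Poincar\'e-series kernel and follows from the invariance (\ref{eq-inv}) by re-indexing the sum over $A\mapsto A^{-1}$, so it is a legitimate (standard) extra ingredient; and the phrase ``fixed compact subset of $\C\setminus\{0,1\}$'' is imprecise, since $(z-w)/(w'-w)$ can approach the punctures $0$ and $1$ --- the set is merely bounded --- but your subsequent remark that $\rho_{\C\setminus\{0,1\}}$ blows up at the punctures is exactly what closes this, so the uniform $c_0>0$ stands. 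What each approach buys: the paper's averaging trick keeps the proof free of symmetry considerations and of any global metric lower bound, needing geometric control only on $\Omega$; your pointwise argument is more direct and your uniform bound $\rho_L\ge c_0$ is a stronger input that would also reprove the paper's distance estimate (\ref{eq-j2}).
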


\begin{remark} Note that $|(f*B_L)|$ is a volume form on $\C\setminus L$ since $f*B_L$ is a quadratic differential. The right hand side in the above inequality is a function  on $\C\setminus L$. Thus, the lemma gives a pointwise comparison between two functions on $\C\setminus L$.
\end{remark}
 The construction of meromorphic partition of unity is as follows. Let $W_{k,l}=z_{k,l}+W$ be the subsets of $\C$. The complex plane $\C$ is a disjoint  union of the sets $W_{k,l}$. By $\chi_{k,l}(z)$ we denote the characteristic function of $W_{k,l}$, and $f_{k,l}=\chi_{k,l}\,dz^2$. For any quasilattice $L$, we can project the integrable quadratic differentials $f_{k,l}$ to the integrable holomorphic quadratic differentials $f_{k,l}*B_L \in QD^1(\C\setminus L)$. Set $P_{k,l}\,dz^2=f_{k,l}*B_L$. We claim that the double sequence $P_{k,l}$ is a meromorphic partition of unity. 
 
 Since 
 \[\sum_{k,l}\chi_{k,l}\equiv  1,\]
by the reproducing property of the Bergman projection we have
 \[
 \sum_{k,l} P_{k,l}\equiv 1. 
 \] 
Thus $P_{k,l}$ satisfies the second condition from Definition \ref{def-part}. It follows from Lemma \ref{Pestimate} that $P_{k,l}$  satisfies the first condition.  To complete the construction of the meromorphic partition of unity, it remains to prove Lemma \ref{Pestimate}. This is done in the next section.

\subsection{Proof of Lemma \ref{Pestimate}}

In the remainder of the proof we let
$$
h=\frac{f*B_L}{dz^2}.
$$
Then $h$ is a holomorphic function on $\C\setminus L$. To prove the lemma we need to show
\begin{equation}\label{eq-ope}
		|h(z_0)| \le \frac{C}{\exp\big(\frac{|z_{0}|}{C}\big)},
\end{equation}
for every $z_0\in \Omega$.

By the construction of $\Omega$, there exists a constant $t_0>0$ such that the injectivity radius (with respect to the hyperbolic metric on $\C\setminus L$ is at greater than $t_0$ at all points in $\Omega$. Let $\xi \in \Omega$, and choose a uniformizing map $\pi:\D\to \C\setminus L$ such that $\pi(0)=\xi$. Then the restriction of $\pi$ to the disc $\D_{r_{0}}(0)$ is univalent where $r_0$ is a function of $t_0$.

Applying the Koebe $1/4$ Theorem to the restriction of $\pi$ to $\D_{r_{0}}(0)$ implies that the disc $\D_{r_{1}}(\xi)\subset \C\setminus L$,
where 
$$
r_1=\frac{r_0}{4}|\pi'(0)|.
$$
But, by construction we know that $r_1<2$. This yields the estimate
 $$
 |\pi'(0)|<\frac{8}{r_{0}}, 
 $$
which in turn implies
$$
s_0< \frac{ds_{L}}{|dz|}(\xi) ,
$$
for every $\xi\in \Omega$, where $s_0=\frac{r_0}{8}$.  By $ds_L$ we denote the hyperbolic length element on $\C\setminus L$, and by $d_L$ the hyperbolic distance on $\C\setminus L$. 
This yields the distance estimate:
\begin{proposition} Let $z,w\in \C \setminus L$. Then
\begin{equation}\label{eq-j2} 
d_L(z,w)\ge \frac{s_0}{2}|z-w|-2s_0.
\end{equation}
\end{proposition}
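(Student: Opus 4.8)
The statement to prove is the distance estimate
\[
d_L(z,w)\ge \frac{s_0}{2}|z-w|-2s_0
\]
for $z,w\in\C\setminus L$. The plan is to use the infinitesimal comparison between the hyperbolic and Euclidean metrics that has just been established — namely $s_0<\frac{ds_L}{|dz|}(\xi)$ for all $\xi\in\Omega$ — and integrate it along a geodesic, paying attention to the fact that this pointwise bound is only valid on $\Omega$ and not on the ``collar'' region $(\C\setminus L)\setminus\Omega$ around the punctures.

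First I would take a hyperbolic geodesic $\gamma$ from $z$ to $w$, parametrised so that $\int_\gamma ds_L=d_L(z,w)$. The naive idea is $d_L(z,w)=\int_\gamma ds_L\ge s_0\int_\gamma|dz|\ge s_0|z-w|$, but this fails because $\gamma$ may pass through the discs $\D_{1/4}(z_{k,l})$ where we have no lower bound on $ds_L/|dz|$. So instead I would split $\gamma$ into the part $\gamma^+=\gamma\cap\Omega$ and the part $\gamma^-$ lying in the union of the open discs $\{d(\cdot,L_0)<\frac14\}$. On $\gamma^+$ we still get $\int_{\gamma^+}ds_L\ge s_0\,\mathrm{length}_{euc}(\gamma^+)$, so it suffices to show that the Euclidean length of $\gamma^+$ is at least $|z-w|$ minus a bounded error. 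The key geometric observation is that the discs removed to form $\gamma^-$ have radius $\frac14$ and their centres $L_0$ form the unit lattice, so they are \emph{disjoint} and, more to the point, any straight segment — hence the projection argument below — crosses each such disc in a sub-arc of Euclidean length at most $\frac12$ (the diameter), while entering and leaving it. Since $\gamma$ is a continuous path from $z$ to $w$, the Euclidean length of $\gamma$ is at least $|z-w|$; and each maximal excursion of $\gamma$ into one of these discs can be ``short-cut'' across the disc, removing at most $\frac12$ of Euclidean length while the straight chord it is replaced by still has length at most $\frac12$. A cleaner way to phrase it: let $\gamma^-$ have $m$ maximal sub-arcs, one in each of $m$ distinct lattice discs (distinct because the discs are separated by Euclidean distance $\frac12>0$ so $\gamma$ cannot re-enter a disc it left without crossing $\Omega$ in between — actually it can, so I would instead bound the number of discs visited by the total length).

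The main obstacle is exactly this combinatorial/geometric bookkeeping: controlling how much Euclidean length of $\gamma$ can be ``wasted'' inside the bad discs. The slick resolution, which I expect the authors use, avoids counting excursions: since each bad disc has Euclidean diameter $\frac12$, replacing $\gamma$ by the concatenation of its portions in $\overline\Omega$ together with straight chords across each visited bad disc produces a path $\widetilde\gamma$ from $z$ to $w$ whose Euclidean length satisfies $\mathrm{length}_{euc}(\widetilde\gamma)\le \mathrm{length}_{euc}(\gamma^+)+\tfrac12\cdot(\#\text{visited discs})$ — wrong direction. Better: the portion of $\gamma$ in $\Omega$ can be supplemented by chords to form a connected path, but I want a \emph{lower} bound on $\mathrm{length}_{euc}(\gamma^+)$. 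So the right move is: parametrise $\gamma$ by Euclidean arclength on $[0,\ell]$, $\ell=\mathrm{length}_{euc}(\gamma)\ge|z-w|$, let $T=|\{t:\gamma(t)\in\Omega\}|=\mathrm{length}_{euc}(\gamma^+)$ and $B=\ell-T=\mathrm{length}_{euc}(\gamma^-)$. Then $d_L(z,w)\ge s_0 T=s_0(\ell-B)\ge s_0(|z-w|-B)$, and it remains to bound $B$. For $B$: the bad set is a disjoint union of open discs of radius $\frac14$; whenever $\gamma$ is inside one such disc, it is within Euclidean distance $\frac14$ of a lattice point, hence — and here is the point — it can contribute at most the disc's diameter $\frac12$ per ``visit'', and between two visits to the \emph{same} disc $\gamma$ must exit to distance $\ge\frac14$ from that point's disc, travelling through $\Omega$; so a crude bound is $B\le \frac12\cdot(\#\text{visited discs})$ and each visited disc ``uses up'' some fixed amount of the displacement $|z-w|$... this still needs care. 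Honestly, I expect the cleanest argument simply observes that the connected components of $\gamma^-$ are arcs contained in discs of radius $\frac14$, and replacing each such arc by the Euclidean chord of its endpoints changes the Euclidean length of that piece to at most $\frac12$; doing this for all (finitely or countably many) components yields a path from $z$ to $w$ lying in $\overline\Omega$ away from the chords, and comparing endpoints one gets $|z-w|\le \mathrm{length}_{euc}(\gamma^+)+\tfrac12\cdot(\#\text{components of }\gamma^-)$, while simultaneously a packing argument on the disjoint radius-$\frac14$ discs along a curve of Euclidean length $\ell$ bounds the number of visited discs, closing the estimate with constants absorbed into the final ``$-2s_0$''. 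I would write this out by first reducing to the case that $\gamma$ visits each bad disc in at most one arc (pushing $\gamma$ slightly or invoking that a hyperbolic geodesic in $\C\setminus L$ meets each such convex-ish region in a connected set — though convexity of these discs in the hyperbolic metric is itself not obvious and is probably the genuine technical wrinkle), and then the chord-replacement plus triangle inequality gives $|z-w|\le \mathrm{length}_{euc}(\gamma^+)+\frac12\cdot N$ where $N$ is the number of distinct bad discs met; finally since distinct bad discs are Euclidean-distance $\frac12$ apart, consecutive ones along $\gamma$ force at least $\frac12$ of $\gamma^+$-length between them, so $\mathrm{length}_{euc}(\gamma^+)\ge \frac12(N-1)$, whence $N\le 2\,\mathrm{length}_{euc}(\gamma^+)+1$ and $|z-w|\le 2\,\mathrm{length}_{euc}(\gamma^+)+\tfrac12$, i.e. $\mathrm{length}_{euc}(\gamma^+)\ge\frac12|z-w|-\frac14$, giving $d_L(z,w)\ge s_0\,\mathrm{length}_{euc}(\gamma^+)\ge\frac{s_0}{2}|z-w|-\frac{s_0}{4}$, which is even stronger than claimed.

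In summary: (i) parametrise a length-minimising curve by Euclidean arclength; (ii) separate the arc into its part in $\Omega$ (where $ds_L\ge s_0|dz|$) and its part in the union of radius-$\frac14$ lattice discs; (iii) show the part in $\Omega$ has Euclidean length $\ge\frac12|z-w|-$ const, using that each bad disc has diameter $\frac12$ and that distinct bad discs are $\frac12$-separated so the curve must spend comparable length in $\Omega$ between consecutive visits; (iv) combine with the infinitesimal bound and absorb the constant into $2s_0$ (noting $s_0<1$ so $\frac{s_0}{4}\le 2s_0$ trivially, and the statement is stated with a generous error term precisely to avoid optimising constants). The main obstacle, as noted, is step (iii) — controlling the interaction of the hyperbolic geodesic with the bad discs; the rest is routine.
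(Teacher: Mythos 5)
Your proposal follows essentially the same route as the paper: there one takes an arbitrary smooth arc $\alpha$ from $z$ to $w$, asserts that the Euclidean length of $\beta=\alpha\cap\Omega$ exceeds $\frac12|z-w|-2$, and applies the pointwise bound $ds_L\ge s_0|dz|$ on $\Omega$; your excursion-counting just supplies the geometric bookkeeping that the paper leaves implicit. Note that you need neither a geodesic nor the single-visit reduction (so the hyperbolic-convexity worry is moot): grouping consecutive excursions into the same disc into one ``visit,'' whose endpoints lie in that closed disc and hence contribute displacement at most $\frac12$, while consecutive visits are to distinct discs and so are separated by at least $\frac12$ of length in $\Omega$, closes the estimate exactly as you computed.
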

\begin{proof} Let $\alpha\subset \C\setminus L$ be a smooth arc connecting $z$ and $w$, and set $\beta=\alpha\cap \Omega$. Then
the Euclidean length of $\beta$ is greater than $(1/2)|z-w|-2$. This implies
$$
d_L(z,w)\ge s_0(\frac{1}{2}|z-w|-2)>\frac{s_0}{2}|z-w|-2s_0.
$$
\end{proof}

The proof of Lemma \ref{Pestimate} is based on the equation (\ref{eq-j2}), and Lemma \ref{bergest}. 
Let $z_0\in \Omega$, and let $D$ denote the disc of radius $\frac 18$ centred at $z_0$. Then $D\subset \C\setminus L$. From the Mean Value Theorem we derive 
\[
h(z_0)=\frac{32}{\pi}\int\limits_{D}h(\xi)\,|d\xi|^2.
\]
Then
\begin{equation}\label{eq1010}
	\begin{aligned}
		|h(z_0)|&\leq \frac{32}{\pi}\int\limits_{D_{z}}\int\limits_{\C_w} \left| f(w)dV^{-1}_L(w) B_L(z,w)\right|\\
		&=\frac{32}{\pi}\int\limits_{W_w}\left( |dw|^2 dV^{-1}_L(w)\int\limits_{D_{z}}|B_L(z,w)|\right).	
				\end{aligned}
\end{equation}
On the other hand, applying Lemma \ref{bergest} yields
\begin{equation}\label{eq-lab}
\int\limits_{D_{z}}|B_L(z,w)|\le \frac{4\pi }{\exp(d_L(D,w))}\, dV_L(w).
\end{equation}
Replacing this in (\ref{eq1010}) we get 
\begin{align*}
		|h(z_0)| &\leq \frac{32}{\pi}\int\limits_{W_w}\left(\frac{4\pi }{\exp(d_L(D,w))} \right)\,|dw|^2 \\
		&\le 	 \frac{16}{\exp(d_L(D,W))} \le \frac{16}{\exp\big(\frac{s_0}{2}|z_0|-2s_0-2 \big)}, 
		\end{align*}
where in the last inequality we used (\ref{eq-j2}). Letting
$$
C=\max\{16\exp(2s_0+2), \frac{2}{s_{0}} \}
$$
proves (\ref{eq-ope}) and  the lemma.

\section{The Reich sequence $\phi_n $}\label{sec-reich}
Let $\alpha(n)=\{\alpha_{p,q}(n)\}$ be the double sequence of complex numbers given by
\[
\alpha_{p,q}(n)=\frac{1}{\left(\frac{|z_{p,q}|}{n}+1\right)^4}.
\]
Consider a quasilattice $L$ equipped with a fixed meromorphic partition of unity $P_{p,q}$, $p,q\in\Z$. Set
\[
\phi_n=\sum_{k,l}\alpha_{k,l}(n)P_{k,l}.
\]
It follows from  the inequality (1) in Definition \ref{def-part} that the $L^1$-norms of the quadratic differentials  $P_{k,l}$ are uniformly bounded (uniform in $k,l$).  Since $\sum_{k,l}\alpha_{k,l}(n)<\infty$ for every $n$, it is clear that $\phi_n(z)\subset QD^1(\C\setminus L)$. In this section, we prove the Theorem \ref{thm3} which states that $\phi_n$ is a Reich sequence.

\subsection{Properties of $\phi_n$}
In this subsection we prove two preliminary lemmas providing  preliminary information about $\phi_n$. For any $p,q,k,l,n$, we have
	\[
	\left|\alpha_{p,q}(n)-\alpha_{k,l}(n)\right|=\alpha_{p,q}(n)\alpha_{k,l}(n)\left|\left(\frac{|z_{p,q}|}{n}+1\right)^4-\left(\frac{|z_{k,l}|}{n}+1\right)^4\right|.
	\]
	We apply the Lagrange theorem to obtain
	\[
	\left|\left(\frac{|z_{p,q}|}{n}+1\right)^4-\left(\frac{|z_{k,l}|}{n}+1\right)^4\right|\leq \frac{4|z_{p,q}-z_{k,l}|}{n}\left|\frac{\max\{|z_{k,l}|,|z_{p,q}|\}}{n}+1\right|^3.
	\]
	Since
	\begin{equation*}
	\begin{aligned}
		\alpha_{k,l}(n)\left|\frac{\max\{|z_{k,l}|,|z_{p,q}|\}}{n}+1\right|^3
&=\frac{1}{\frac{|z_{k,l}|}{n}+1}\left(\frac{\max\{|z_{k,l}|,|z_{p,q}|\}+n}{|z_{k,l}|+n}\right)^3\\
&\leq \left(\frac{|z_{k,l}|+|z_{p,q}|+1}{|z_{k,l}|+1}\right)^3=\left(2+\frac{|z_{p,q}|-|z_{k,l}|-1}{|z_{k,l}|+1}\right)^3\\
&\leq \left(2+|z_{k,l}-z_{p,q}|\right)^3,
	\end{aligned}	
	\end{equation*}
	we derive the estimate
	\begin{equation}\label{alpha}
		|\alpha_{p,q}(n)-\alpha_{k,l}(n)|\leq\frac{4\alpha_{p,q}(n)}{n}(2+|z_{k,l}-z_{p,q}|)^4.
	\end{equation}

\begin{lemma}\label{mainesti}
	There exist a constant $C$ such that for any $n,p,q$, and any $z\in W_{p,q}\, \cap\, \Omega$, we have
	\[
	|\phi_n(z)-\alpha_{p,q}(n)|\leq C\frac{\alpha_{p,q}(n)}{n}.
	\]
\end{lemma}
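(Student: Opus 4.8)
The plan is to bound the difference $\phi_n(z)-\alpha_{p,q}(n)$ by writing it as a sum over all lattice indices and splitting that sum according to how far $z_{k,l}$ is from $z_{p,q}$. Since $\sum_{k,l}P_{k,l}(z)=1$ on $\C\setminus L$, we have the identity
\[
\phi_n(z)-\alpha_{p,q}(n)=\sum_{k,l}\big(\alpha_{k,l}(n)-\alpha_{p,q}(n)\big)P_{k,l}(z).
\]
Now I would take absolute values and apply the two estimates available: the decay estimate (1) from Definition \ref{def-part}, which for $z\in\Omega$ gives $|P_{k,l}(z)|\le C\exp(-|z-z_{k,l}|/C)$, and the coefficient estimate \eqref{alpha}, which gives $|\alpha_{k,l}(n)-\alpha_{p,q}(n)|\le \frac{4\alpha_{p,q}(n)}{n}(2+|z_{k,l}-z_{p,q}|)^4$. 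Combining these, each term is at most
\[
\frac{4\alpha_{p,q}(n)}{n}(2+|z_{k,l}-z_{p,q}|)^4\cdot C\exp\!\Big(-\frac{|z-z_{k,l}|}{C}\Big).
\]

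The remaining point is to see that $\sum_{k,l}(2+|z_{k,l}-z_{p,q}|)^4\exp(-|z-z_{k,l}|/C)$ is bounded by a constant independent of $n$, $p$, $q$, and of $z\in W_{p,q}$. For this I would use that $z\in W_{p,q}$ forces $|z-z_{p,q}|\le\frac{1}{\sqrt2}<1$, so that $|z-z_{k,l}|$ and $|z_{p,q}-z_{k,l}|$ differ by at most a bounded additive constant; hence $(2+|z_{k,l}-z_{p,q}|)^4\le C'(1+|z-z_{k,l}|)^4$ and $\exp(-|z-z_{k,l}|/C)$ can absorb this polynomial factor at the cost of enlarging the constant, leaving something like $\sum_{k,l}\exp(-|z-z_{k,l}|/(2C))$. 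Since the lattice points $z_{k,l}$ are spaced by $1$, a standard comparison of this sum with the integral $\int_{\C}\exp(-|z-w|/(2C))\,|dw|^2$ shows the sum is uniformly bounded. Putting everything together yields $|\phi_n(z)-\alpha_{p,q}(n)|\le C''\alpha_{p,q}(n)/n$ with $C''$ depending only on the partition-of-unity constant $C$, as required.

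The main obstacle is purely bookkeeping: making sure that the polynomial growth factor $(2+|z_{k,l}-z_{p,q}|)^4$ coming from \eqref{alpha} is genuinely controlled by the exponential decay of the partition of unity uniformly in the base point $z_{p,q}$. This is where the restriction $z\in W_{p,q}\cap\Omega$ is essential — the $\Omega$ part to license the decay estimate (1), and the $W_{p,q}$ part to tie $|z-z_{k,l}|$ to $|z_{p,q}-z_{k,l}|$ — but no genuinely hard estimate is involved once the sum is compared to a convergent Gaussian-type integral over the plane.
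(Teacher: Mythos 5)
Your proposal is correct and follows essentially the same route as the paper: both start from the identity $\phi_n(z)-\alpha_{p,q}(n)=\sum_{k,l}(\alpha_{k,l}(n)-\alpha_{p,q}(n))P_{k,l}(z)$, combine the exponential decay of $P_{k,l}$ on $\Omega$ with the coefficient bound \eqref{alpha}, use $z\in W_{p,q}$ to trade $|z-z_{k,l}|$ for $|z_{p,q}-z_{k,l}|$ up to a bounded constant, and conclude via the uniform (in $p,q$) convergence of the resulting polynomial-times-exponential lattice sum. The only cosmetic difference is that you compare the sum to an integral while the paper simply notes the sum is translation-invariant in the indices; both are fine.
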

\begin{proof}
	Let $C_1$ be the constant from the inequality (1) in Definition \ref{def-part}. Using the identity (2) from Definition \ref{def-part}, we have 
	\begin{equation}\label{Idk}
		\begin{aligned}
			\left|\phi_n(z)-\alpha_{p,q}(n)\right|&=\left|\sum_{k,l}\alpha_{k,l}(n)P_{k,l}(z)-\alpha_{p,q}(n)\sum_{k,l}P_{k,l}(z)\right|\\
			&\leq\sum_{k,l}\left|\alpha_{k,l}(n)-\alpha_{p,q}(n)\right|\text{ }|P_{k,l}(z)|\\
			&\leq C_1\sum_{k,l}\frac{|\alpha_{k,l}(n)-\alpha_{p,q}(n)|}{\exp(|z-z_{k,l}|/C_1)}
		\end{aligned}
	\end{equation}
	for every $n,p,q$, and $z\in W_{p,q}\, \cap\, \Omega$. Since $z\in W_{p,q}$, we have 
	\[
	|z-z_{k,l}|\geq |z_{p,q}-z_{k,l}|-\frac{\sqrt{2}}{2},
	\]
	so 
	\[
	\frac{1}{\exp(|z-z_{k,l}|/C_1)}\leq \exp\left(\frac{\sqrt{2}}{2C_1}\right)\frac{1}{\exp(|z_{k,l}-z_{p,q}|/C_1)}
	\]
	Combining this with the equations (\ref{alpha}) and (\ref{Idk}), we obtain
	\[
	|\phi_n(z)-\alpha_{p,q}(n)|\leq \left(C_1\sum_{k,l}\frac{4(2+|z_{k,l}-z_{p,q}|)^4\exp\left(\frac{\sqrt{2}}{2C_1}\right)}{exp\left(\frac{|z_{k,l}-z_{p,q}|}{C_1}\right)}\right)\frac{\alpha_{p,q}(n)}{n}.
	\]
This proves the lemma by letting	
$$
C=C_1\sum_{k,l}\frac{4(2+|z_{k,l}-z_{p,q}|)^4\exp\left(\frac{\sqrt{2}}{2C_1}\right)}{exp\left(\frac{|z_{k,l}-z_{p,q}|}{C_1}\right)}
$$
(note that $C$ does not depend on $p$ and $q$).	\end{proof}

\begin{lemma}\label{lemma-moja} For every $z\in W_{p,q}\cap\Omega$, the inequality
\[
|\phi_n(z)|-Re(\phi_n(z))\leq\frac{C^2}{n^2}\alpha_{p,q}(n),
\]
holds for $n$ large enough, where $C$ is the constant from Lemma \ref{mainesti}.
\end{lemma}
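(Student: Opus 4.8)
The plan is to exploit Lemma \ref{mainesti} together with the single structural fact that $\alpha_{p,q}(n)$ is a \emph{positive real} number. Lemma \ref{mainesti} says that $\phi_n(z)$ lies within a disc of radius $C\alpha_{p,q}(n)/n$ centred at the positive real point $\alpha_{p,q}(n)$; consequently, for $n$ not too small, $\mathrm{Re}(\phi_n(z))$ is comparable to $\alpha_{p,q}(n)$, while $\mathrm{Im}(\phi_n(z)) = O(\alpha_{p,q}(n)/n)$. The point is then that for any complex number $\zeta$ with $\mathrm{Re}(\zeta)>0$ one has the elementary estimate $|\zeta|-\mathrm{Re}(\zeta)\le \mathrm{Im}(\zeta)^2/(2\,\mathrm{Re}(\zeta))$, i.e.\ the defect $|\zeta|-\mathrm{Re}(\zeta)$ is \emph{quadratic} in the imaginary part. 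Squaring the $O(1/n)$ bound from Lemma \ref{mainesti} is exactly what produces the $O(1/n^2)$ claimed here.

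Concretely, I would set $a=\alpha_{p,q}(n)>0$ and $\zeta=\phi_n(z)$, so that $|\zeta-a|\le Ca/n$ by Lemma \ref{mainesti}, where $C$ is the constant from that lemma. First record that once $n\ge 2C$ we have $\mathrm{Re}(\zeta)=a+\mathrm{Re}(\zeta-a)\ge a-|\zeta-a|\ge a(1-C/n)\ge a/2>0$, hence also $|\zeta|\ge\mathrm{Re}(\zeta)>0$ and $|\zeta|+\mathrm{Re}(\zeta)\ge 2a(1-C/n)$. Then, using $\mathrm{Im}(\zeta)=\mathrm{Im}(\zeta-a)$ and the identity $|\zeta|^2-\mathrm{Re}(\zeta)^2=\mathrm{Im}(\zeta)^2$,
\[
|\zeta| - \mathrm{Re}(\zeta) = \frac{|\zeta|^2 - \mathrm{Re}(\zeta)^2}{|\zeta| + \mathrm{Re}(\zeta)} = \frac{\mathrm{Im}(\zeta - a)^2}{|\zeta| + \mathrm{Re}(\zeta)} \le \frac{|\zeta - a|^2}{2a(1 - C/n)} \le \frac{C^2 a}{2 n^2 (1 - C/n)}.
\]
For $n\ge 2C$ we have $1-C/n\ge 1/2$, so the right-hand side is at most $\frac{C^2}{n^2}\,a=\frac{C^2}{n^2}\,\alpha_{p,q}(n)$, which is the assertion. (The difference-of-squares step requires $|\zeta|+\mathrm{Re}(\zeta)\ne 0$, which is why the positivity of $\mathrm{Re}(\zeta)$ — and hence a lower bound on $n$ — enters.)

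I do not anticipate any genuine obstacle; the argument is routine bookkeeping around Lemma \ref{mainesti}. The only point meriting care is the positivity of $\mathrm{Re}(\zeta)$, needed to make the algebraic identity legitimate and its denominator bounded below, and this is precisely what the hypothesis "$n$ large enough" encodes, with the explicit threshold $n\ge 2C$. If one prefers to bypass the identity (and even to drop the "large $n$" restriction), one may instead invoke concavity of the square root in the form $\sqrt{x}\le s+\tfrac{x-s^2}{2s}$ for $x\ge 0$, $s>0$, applied with $s=a$ and $x=|\zeta|^2=a^2+2a\,\mathrm{Re}(\zeta-a)+|\zeta-a|^2$; discarding the (possibly negative) term $\mathrm{Re}(\zeta-a)$ gives $|\zeta|-\mathrm{Re}(\zeta)\le\frac{|\zeta-a|^2}{2a}\le\frac{C^2 a}{2n^2}\le\frac{C^2}{n^2}\alpha_{p,q}(n)$ for every $n$.
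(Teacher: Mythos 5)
Your argument is correct and is essentially the paper's proof: the paper simply normalizes, setting $\lambda=\phi_n(z)/\alpha_{p,q}(n)$ so that $|\lambda-1|\le C/n\le\frac 12$ for large $n$, and then uses the same identity $|\lambda|-\mathrm{Re}(\lambda)=\mathrm{Im}(\lambda)^2/\big(|\lambda|+\mathrm{Re}(\lambda)\big)\le\epsilon^2$ with $\epsilon=C/n$, which after multiplying back by $\alpha_{p,q}(n)>0$ gives the stated bound. Your unnormalized bookkeeping (and the square-root concavity variant removing the largeness assumption on $n$) is just a cosmetic repackaging of the same estimate.
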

\begin{proof} Note that if a complex number $\lambda$ satisfies $|\lambda-1|\leq\epsilon\leq \frac 12$ then
\[
|\lambda|-Re(\lambda)=\frac{Im(\lambda)^2}{|\lambda|+Re(\lambda)}\leq\epsilon^2.
\]
Combining this with  Lemma \ref{mainesti}  proves the  lemma. \end{proof}

\subsection{Proof of the Theorem \ref{thm3}}
In this subsection we  prove that $\phi_n$ is a Reich sequence.  We show that $\phi_n$ has the following three properties which ensures it is a Reich sequence:
	\begin{enumerate}
		\item $\lim_{n\rightarrow\infty}\phi_n(z)=1$ , for any $z\in \C\setminus L$.
		\item $\mathop\int\limits_\C \Big(|\phi_n|-Re\left(\phi_n\right)\Big)|dz|^2$ is uniformly bounded.
		\item If $S_{n,K}=\{z\in \C:|\phi_n(z)|\geq K\}$ then we have
		\[
		\lim_{K\rightarrow\infty}\mathop\int\limits_{S_{K,n}}|\phi_n(z)||dz|^2=0
		\]
		uniformly in $n\in\mathbb N$.
	\end{enumerate}
The complex plane $\C$ is decomposed into the domain $\Omega$, and the radius $\frac 14$ disks  centred at the points $z_{k,l}$. 
Lemma \ref{mainesti} and Lemma \ref{lemma-moja} holds for points in $\Omega$ which enables us to prove the above three properties when  $\phi_n$ is restricted to $\Omega$. That this is sufficient follows from the following lemma proved in Proposition 3.1 and  Lemma 4.1 in \cite{markovic}.  
By  $\D_{\frac 12}$ we denote the disk of radius $\frac{1}{2}$ centred at $0$.
\begin{lemma}\label{localesti} There exist a constant $C$ with the following properties. Suppose $f$ is a meromorphic function  on $\overline{\D_{\frac 12}}$, which   is holomorphic on $\D_{\frac 12}(0)\backslash\{0\}$, and has the first order pole at $0$. Assume that for every $z\in\partial\D_{\frac 12}$, we have
	\[
	|f(z)-1|\leq\epsilon\leq\frac 12.
	\]
	Then 
	\begin{equation}\label{eq-1}
		\int\limits_{\D_{\frac 12}}\big(|f|-Re(f)\big)|dz|^2\leq C\epsilon^2.		
	\end{equation}
Let $S_K=\{z\in\D_{\frac 12}:|f(z)|\geq K\}$. Then for any $K\geq 100$ we have
\begin{equation}\label{eq-2}
\int\limits_{S_K}|f||dz|^2\leq\frac{C\epsilon^2}{K}.
\end{equation}
\end{lemma}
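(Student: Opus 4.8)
The plan is to reduce both inequalities to estimates on the Laurent expansion of $f$ near its pole. Write $f(z) = \frac{a}{z} + g(z)$ where $g$ is holomorphic on $\overline{\D_{\frac 12}}$ and $a \in \C$. First I would control the residue $a$: since $|f - 1| \le \epsilon$ on $\partial \D_{\frac 12}$, the function $f$ is bounded by $1+\epsilon \le \frac 32$ there, so by the residue formula $|a| = \left|\frac{1}{2\pi i}\int_{\partial \D_{\frac 12}} z f(z)\, dz\right| \le \frac{1}{2\pi}\cdot \frac 12 \cdot \frac 32 \cdot 2\pi \cdot \frac 12 = \frac{3}{8}$, and more to the point, for the sharp estimate one wants $|a| \lesssim \epsilon$, which I would get by applying the same residue formula to $f - 1$ (whose residue at $0$ is still $a$): $|a| \le \frac{1}{2\pi}\cdot \frac 12 \cdot \epsilon \cdot 2\pi \cdot \frac 12 = \frac{\epsilon}{4}$. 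Similarly, on $\partial \D_{\frac 12}$ we have $|g - 1| = |f - \frac{a}{z} - 1| \le \epsilon + 2|a| \le \frac 32 \epsilon$, and since $g$ is holomorphic, the maximum principle gives $|g - 1| \le \frac 32 \epsilon$ on all of $\overline{\D_{\frac 12}}$; in particular $g$ is bounded and close to $1$ throughout the disk.

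Next I would prove \eqref{eq-1}. Using the elementary identity $|f| - \mathrm{Re}(f) = \frac{(\mathrm{Im}\, f)^2}{|f| + \mathrm{Re}(f)}$ valid where $\mathrm{Re}(f) > 0$, I split $\D_{\frac 12}$ into the region where $f$ is close to $1$ (say $|f - 1| \le \frac 12$), where the denominator is bounded below by $1$ and so the integrand is at most $(\mathrm{Im}\, f)^2 \le |f-1|^2$, and its complement. On the "close" region the contribution is bounded by $\int_{\D_{\frac 12}} |f - 1|^2 |dz|^2$; writing $f - 1 = \frac{a}{z} + (g-1)$ and using $|a| \le \frac{\epsilon}{4}$, $|g-1| \le \frac 32\epsilon$ together with $\int_{\D_{\frac 12}} \frac{|dz|^2}{|z|^2} = 2\pi \log(\text{something})$ — wait, that integral diverges, so instead I use $\int_{\D_{\frac 12} \setminus \D_\delta} \frac{|dz|^2}{|z|^2}$; to avoid the logarithmic divergence the right move is to note that on the region where $|f| + \mathrm{Re}(f)$ is small, $|z|$ is comparably small, so one groups the pole contribution differently: on $\{|z| \le t\}$ for a small threshold $t$ one bounds $\int_{|z|\le t}(|f| - \mathrm{Re}(f))|dz|^2 \le \int_{|z| \le t}|f|\,|dz|^2 \le |a|\int_{|z|\le t}\frac{|dz|^2}{|z|} + O(\epsilon t^2) = 2\pi |a| t + O(\epsilon t^2) \lesssim \epsilon t$, and on $\{|z| \ge t\}$ the denominator $|f| + \mathrm{Re}(f)$ is bounded below by a constant (for $t$ fixed, say $t = \frac 14$, and $\epsilon$ small) so the integrand is $\lesssim |f - 1|^2 \lesssim |a|^2/|z|^2 + |g-1|^2 \lesssim \epsilon^2$ after integrating over $\{\frac 14 \le |z| \le \frac 12\}$. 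Summing gives $\int_{\D_{\frac 12}}(|f| - \mathrm{Re}(f))|dz|^2 \le C\epsilon$, which is weaker than the claimed $C\epsilon^2$; the improvement to $\epsilon^2$ comes from a more careful accounting of the pole term using that $\mathrm{Im}(a/z)$ and $\mathrm{Re}(a/z)$ interact: in fact $|a/z| - \mathrm{Re}(a/z) = |a|/|z| - \mathrm{Re}(a/z)$ and on circles $|z| = r$ this has average $|a|/r$, but the cancellation with $\mathrm{Re}(g) \approx 1 > 0$ in the denominator is what saves a power of $\epsilon$ — this is precisely the delicate point, and I would handle it by the substitution $\zeta = 1/z$ that turns the pole into a linear behavior at infinity, reducing to the computation already present in the references.

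For \eqref{eq-2}, the set $S_K = \{|f| \ge K\}$ with $K \ge 100$ is contained in a small disk around $0$: since $|g| \le 1 + \frac 32\epsilon \le 2$ on $\D_{\frac 12}$, $|f| \ge K$ forces $|a|/|z| \ge K - 2$, i.e. $|z| \le \frac{|a|}{K-2} \le \frac{|a|}{K/2} = \frac{2|a|}{K}$; so $S_K \subset \D_{2|a|/K}(0)$. Then $\int_{S_K}|f|\,|dz|^2 \le \int_{\D_{2|a|/K}}\left(\frac{|a|}{|z|} + 2\right)|dz|^2 \le |a|\cdot 2\pi \cdot \frac{2|a|}{K} + 2\pi\left(\frac{2|a|}{K}\right)^2 = \frac{4\pi |a|^2}{K} + \frac{8\pi |a|^2}{K^2} \le \frac{C|a|^2}{K} \le \frac{C\epsilon^2}{K}$, using $|a| \le \frac{\epsilon}{4}$. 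This part is routine; the only thing to be careful about is keeping the constants independent of $f$, $\epsilon$, $K$, which holds throughout.

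The main obstacle is the sharp power $\epsilon^2$ (rather than $\epsilon$) in \eqref{eq-1}: the naive estimate from $\int |f-1|^2$ diverges because of the pole, and extracting the extra factor of $\epsilon$ requires exploiting the positivity of $\mathrm{Re}(f) \approx 1$ in the denominator of $|f| - \mathrm{Re}(f) = \frac{(\mathrm{Im}\,f)^2}{|f| + \mathrm{Re}(f)}$ against the pole term, which is exactly the content of Proposition 3.1 and Lemma 4.1 of \cite{markovic} that we are invoking; so in the present paper this lemma is simply quoted rather than reproved.
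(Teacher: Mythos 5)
Your treatment of \eqref{eq-2} is essentially correct and complete: the Laurent splitting $f=\frac{a}{z}+g$, the bound $|a|\lesssim\epsilon$ obtained by applying the residue/Cauchy formula to $f-1$, the maximum principle applied to $g-1$, the inclusion $S_K\subset\D_{2|a|/K}(0)$, and the resulting bound $C|a|^2/K\le C\epsilon^2/K$ all work. (One slip: the residue is $a=\frac{1}{2\pi i}\oint_{\partial\D_{1/2}}f(z)\,dz$, not $\frac{1}{2\pi i}\oint_{\partial\D_{1/2}}zf(z)\,dz$, which is zero; with the correct formula one gets $|a|\le\epsilon/2$ and $|g-1|\le2\epsilon$, which changes only constants.) For comparison, the paper itself offers no proof of this lemma: it is quoted from Proposition 3.1 and Lemma 4.1 of \cite{markovic}, so there is no in-paper argument to measure you against.

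The genuine gap is \eqref{eq-1}. Your own computation produces only $C\epsilon$, as you acknowledge, and the promised upgrade to $C\epsilon^2$ (``substitution $\zeta=1/z$\ldots reducing to the computation already present in the references'') is a deferral, not a proof; the heart of the lemma is left unproved. Moreover, the obstruction you hit is not removable by more careful bookkeeping: from the stated hypotheses alone the bound $C\epsilon^2$ is false. Take $f(z)=1+\frac{\epsilon}{2z}$, which satisfies $|f-1|=\epsilon$ on $\partial\D_{\frac12}$ and has a simple pole at $0$. On the circle $|z|=r$ the mean of $|f|-\mathrm{Re}(f)$ is $\frac{\epsilon^2}{16r^2}\,(1+o(1))$, because the circular mean of $|1+w|$ over $|w|=s$ equals $1+\frac{s^2}{4}+O(s^4)$; integrating over $\{\epsilon\le|z|\le\frac12\}$ gives at least $c\,\epsilon^2\log\frac1\epsilon$, which exceeds $C\epsilon^2$ for any fixed $C$ once $\epsilon$ is small. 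This is precisely the logarithmic divergence you noticed in $\int|f-1|^2$: it is real, and with only the boundary hypothesis plus the consequent residue bound $|a|=O(\epsilon)$, the sharp general estimate is of order $\epsilon^2\log\frac1\epsilon$, not $\epsilon^2$. Hence any correct proof of \eqref{eq-1} must use information beyond what the statement records --- for instance a residue bound $|a|=O(\epsilon^2)$ or cancellation special to the functions $f_{n,k,l}$ to which the lemma is applied --- so your plan cannot be completed as stated, and the version proved in \cite{markovic} must carry such an extra hypothesis; the same caveat applies to the statement as quoted in the paper.
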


Now, consider $z\in\Omega$. Then  Lemma \ref{mainesti} enables us to compute the limit
$$
\lim_{n\rightarrow\infty}\phi_n(z)=1. 
$$
From (\ref{eq-1}) in Lemma \ref{localesti}, and the mean value theorem for holomorphic functions, one easily concludes that the same holds for every $z\in \C\setminus L$.
Thus, we have verified the first property of the sequence $\phi_n$. It remains to do the same with the second and the third property.

Let $C_1$ be the constant from  Lemma \ref{mainesti}, and $C_2$ the constant from  Lemma \ref{localesti}. We apply  Lemma \ref{mainesti} to the function
\[
f_{n,k,l}(z)=\frac{\phi_n(z+w_{k,l})}{\alpha_{k,l}(n)},
\]
and conclude that for any $n,k,l$, and any $z\in \partial\D_{\frac 12}$, we have
\[
|f_{n,k,l}(z)-1|\leq\frac {C_1} {n}.
\]
Thus, $f_{n,k,l}$ satisfies assumption from Lemma \ref{localesti} for sufficiently large $n$. This implies
\begin{equation}\label{eq-11}
\int_{\D_{\frac 12}(w_{k,l})}\big(|\phi_n|-Re(\phi_n)\big)|dz|^2\leq\frac {C_2C_1^2}{n^2}\alpha_{k,l}(n),
\end{equation}
and	
\begin{equation}\label{eq-22}	
\int\limits_{S_{n,k,l,K}}|\phi_n||dz|^2\leq \frac{C_2C_1^2}{n^2K}\alpha^2_{k,l}(n),
\end{equation}
where $S_{n,k,l,K}=\{z\in\D_{\frac 12}(w_{k,l}):|\phi_n(z)|\geq K\}$. 

Now we prove the second property of $\phi_n$. Combining the pointwise estimate on $W_{p,q}\cap\Omega$ from Lemma \ref{lemma-moja}, with (\ref{eq-11}), we obtain
\begin{equation*}
	\begin{aligned}
		\int\limits_{W_{k,l}}\big(|\phi_n|-Re(\phi_n)\big)|dz|^2&\leq \int\limits_{W_{k,l}\cap\,\Omega}\big(|\phi_n|-Re(\phi_n)\big)|dz|^2+\int\limits_{\D_{\frac 12}(w_{k,l})}\big(|\phi_n|-Re(\phi_n)\big)|dz|^2\\
		&\leq \frac{(1+C_2)C_1^2}{n^2}\alpha_{k,l}(n),
	\end{aligned}
\end{equation*}
and therefore
\begin{equation*}
\begin{aligned}
	\overline\lim_{n\rightarrow \infty}\int\limits_\C \big(|\phi_n|-Re(\phi_n)\big)|dz|^2&=\overline\lim_{n\rightarrow\infty}\sum_{k,l}\int\limits_{W_{k,l}}\big(|\phi_n|-Re(\phi_n)\big)|dz|^2\\
	&\leq(1+C_2)C_1^2\, \overline\lim_{n\rightarrow \infty}\sum_{k,l}\frac{1}{n^2}\frac{1}{\left(\left|\frac{z_{k,l}}{n}\right|+1\right)^4}\\
	&=(1+C_2)C_1^2\int\limits_{\C}\frac{1}{(|z|+1)^4}|dz|^2<\infty
\end{aligned}
\end{equation*}
confirming that $\phi_n$ has the second property.

Finally we prove thr $\phi_n$ satisfies the third condition 
	Let $S_{n,K}=\{z\in \C:|\phi_n(z)|\geq K\}$. If $K\geq 1+C_1$ then from  Lemma \ref{mainesti} we conclude that $S_{n,K}$ is disjoint from $\Omega$, and thus
\[
S_{n,K}=\bigcup_{k,l}S_{n,k,l,K}.
\]
We apply the second estimate in Lemma \ref{localesti} to obtain
\begin{equation*}
	\begin{aligned}
		\overline\lim_{n\rightarrow\infty}\int\limits_{S_{n,K}}|\phi_n||dz|^2&\leq\overline\lim_{n\rightarrow\infty}\sum_{k,l}\int\limits_{S_{n,k,l,K}}|\phi_n||dz|^2\\
		&\leq \frac{C_2C_1^2}{K}\overline\lim_{n\rightarrow\infty}\frac{1}{n^2}\frac{1}{\left(\left|\frac{z_{k,l}}{n}\right|+1\right)^8}\\
	&=\frac{C_2C_1^2}{K}\int\limits_{\C}\frac{1}{(|z|+1)^8}|dz|^2=O\left(\frac 1K\right)
	\end{aligned}
\end{equation*}
This complete the proof of the Theorem \ref{thm3}.

\end{document}